\newcommand{\R}{\mathbb{R}}
\newcommand{\p}{\partial}
\newcommand{\px}{\partial_x}
\newcommand{\pt}{\partial_t}
\newcommand{\K}{\kappa}
\newcommand{\bu}{\bar{u}}
\newcommand{\lp}{\left(}
\newcommand{\rp}{\right)}
\DeclareMathOperator{\I}{Im}
\DeclareMathOperator{\Rea}{Re}
\newtheorem{thm}{Theorem}[section]
\newtheorem{lemma}{Lemma}[section]
\newtheorem{rmk}{Remark}[section]
\newtheorem{prop}{Proposition}[section]
\title{Global Dynamics of small data solutions to the Derivative Nonlinear Schr\"odinger equation}
\author{Allison Byars}
\date{}
\begin{document}
\begin{abstract}
    In this paper, we consider the derivative nonlinear Schr\"odinger
(DNLS) equation. While the existence theory has been intensely studied, properties like dispersive estimates for the solutions have not yet been investigated. Here we address this question for the problem with small and localized data, and show that a  dispersive estimate for the solution holds globally in time. For the proof of our result we use vector field methods combined with the \emph{testing by wave packets method}, whose implementation in this problem is novel.
\end{abstract}

\maketitle

\section{Introduction}

We consider the Cauchy problem for the derivative nonlinear Schr\"odinger
(DNLS) equation
\begin{equation}
\begin{cases}
iu_t+u_{xx}=-i\partial_x(|u|^2u)\\
u(0,x)=u_0(x),
\end{cases}
\tag{DNLS}
\label{dnls}
\end{equation}
 where the unknown is a complex valued function, $u:\mathbb{R}\times \mathbb{R}\rightarrow \mathbb{C}.$ \eqref{dnls} models the propagation of large-wavelength Alfv\'en waves in plasma. There are also multiple other physical phenomena that are modeled by this equation, which makes this problem even more interesting. The equation was first derived in 1974 by Mj{\o}elhus \cite{mjoelhus1974application} using the reductive perturbation method. Though the derivation is not the focus of this paper, it is helpful to see it in order to understand more about the model and the stability of the Alfv\'en waves.  We refer the interested reader to the following references as well as the references within \cite{cite:champeaux, jenkins2019derivative,cite:physical2,cite:physical3, cite:physical4}. 

The \eqref{dnls} equation is a dispersive equation, which has the dispersion relation $\omega (\xi)=-\xi^2,$ where the group velocity of waves is given by $\omega'(\xi)=-2\xi$. This depends on the frequency $\xi$, which yields the dispersive character.
  
\eqref{dnls} also admits the following scaling law: If $u$ is a solution,  then for $\lambda>0,$  so is
\begin{equation*}
    u_\lambda (t,x):= \sqrt{\lambda}u(\lambda^2t, \lambda x).
\end{equation*}
This scaling gives the critical Sobolev space as $L^2(\R)$, which plays an important role as a local well-posedness threshold.

Another interesting property of \eqref{dnls} is that it is completely integrable, and has an associated Lax Pair 
 \begin{align*}
    L(\kappa;u)=\begin{bmatrix}
1 & 0 \\
0 & -1 
\end{bmatrix}\begin{bmatrix}
\K-\px & \sqrt{\K}u \\
i\sqrt{\K}\bar{u} & \K+\px 
\end{bmatrix}=\begin{bmatrix}
\K-\px & \sqrt{\K}u \\
-i\sqrt{\K}\bar{u} & -(\K+\px) 
\end{bmatrix}
\end{align*}
and 

\begin{align*}
   M(\K)= \begin{bmatrix}
        2i\K^2-\K|u|^2 & 2i\K^{\frac{3}{2}}u-\K^{\frac{1}{2}}|u|^2u+i\K^{\frac{1}{2}}u_x\\
        2\K^{\frac{3}{2}}\bu +i\K^{\frac{1}{2}}|u|^2\bu -\K^{\frac{1}{2}}\bu_x & -2i\K^2+\K|u|^2
    \end{bmatrix}
\end{align*}
where 
\begin{align*}
    \frac{d}{dt}L\Phi=[M,L]\Phi.
\end{align*}
This Lax Pair was introduced by Kaup and Newell \cite{cite:KaupNew} and further explored in  \cite{cite:WPvisanH16}. We will not be using this here, but it is very helpful in many other situations. \eqref{dnls} also has an inverse scattering transform \cite{jenkins2018global} and an infinite number of conservation laws, three of which are the mass,
\begin{equation*}
    M(u):= \int |u(x)|^2\, dx,
\end{equation*}
the momentum,
\begin{equation*}
    P(u):=\int \I(\bar{u}\px u)-\frac{1}{2}|u|^4 \, dx,
\end{equation*}
and the energy, 
\begin{equation*}
     E(u):=\int |u_x|^2-\frac{3}{2}|u|^2 \I(\bar{u}u_x)+\frac{1}{2}|u|^6 \, dx.
\end{equation*}

It also has a Hamiltonian structure given by the operator $\mathcal{J}=\px$ with $P$ as the Hamiltonian. This means we can generate the \eqref{dnls} equation using the functional derivative of the momentum: 
\begin{align*}
    u_t=\mathcal{J}\left(\frac{\delta P(u)}{\delta \bu}\right).
\end{align*}
While interesting, the complete integrability does not play any role in the 
present work.

\medskip

When given a PDE, the first natural question to address is the well-posedness of the system in suitably  chosen Sobolev spaces. 
As stated above, the critical Sobolev space for \eqref{dnls} is $H^0(\R)=L^2(\R)$.  This means that we expect to get well-posedness above this threshold, so in $H^s$ for $s\geq 0$ and ill-posedness for $s<0.$ However, this does not always work out exactly as we would expect.  By looking at the equation, we can see \eqref{dnls} is technically a semilinear equation. However, for $H^s$ with $s<\frac{1}{2}$, we do not have Lipschitz dependence on the initial data \cite{cite:Bialess12}, so below that threshold, the problem behaves more like a quasilinear problem.  This means that for $s>\frac{1}{2}$, we can most likely prove well-posedness using a fixed point argument. In fact, well-posedness for $s>\frac{3}{2}$ has been well studied \cite{cite:WP>3/2,cite:WP>3/2.2}, and the case $s\geq\frac{1}{2}$ was proved in '99 \cite{cite:Taks12} using $X^{s,b}$ spaces. 
The case $s=\frac{1}{2}$ was studied further in \cite{bahouri2022global} where they prove global well-posedness in $H^{\frac{1}{2}}$.
The ill-posedness for $s<0$ can be seen from the self similar solutions constructed in \cite{cite:selfsim1,cite:selfsim2}.
Another related classical result is the work of Hayashi and Ozawa in \cite{hayashi1994modified}.  In this paper, the authors proved the existence of a modified wave operator.  In Corollary 1.1, we see the decay rate of the solution they got matches the one we obtain in Theorem \ref{t:main}.

 The remaining gap $s \in [0,1/2)$ was only recently settled by  Harrop-Griffiths et al. \cite{cite:1} who proved that \eqref{dnls} is globally well-posed in $L^2(\R)$, and more generally in $H^s$ for $0\leq s\leq \frac{1}{2}.$ They used the second generation method of commuting flows, where they take advantage of local smoothing and tightness estimates. This result closes the gap and finishes the study of well-posedness for \eqref{dnls}. 

Once we have the well-posedness theory, the next natural questions to ask are: What does the solution look like on long time scales? Does it decay like the solution to the linear equation? Are there solitons?

 Work on the dispersive decay of \ref{dnls} has been done in \cite{hayashi1997asymptotic,hayashi1998asymptotic}. However, in these papers, they use gauge transformations and pseduo conformal transforms to simplify \ref{dnls} and transform it into a semilinear problem  

In this paper, we prove, under some mild conditions on the initial data, that the solution to \eqref{dnls} decays at the same rate as the solution to the linear equation, globally in time.  We emphasize that in contrast with \cite{hayashi1997asymptotic,hayashi1998asymptotic}, we treat the nonlinearity as quasilinear, without transforming it to a semilinear system.  This allows us to improve on the regularity assumptions, only taking the initial data to be small in $L^2$ and localized in $H^1,$ whereas they take the initial data in $H^{\frac{3}{2}+\delta}$ and localized in $H^1.$ 

This result is a contribution to the study of the \textit{Soliton Resolution Conjecture}, which roughly states that under some ideal conditions on the initial data, every solution to a dispersive partial differential equation can be written as the sum of solitons and a dispersive part. In this result, for the class of initial data we consider, there are no solitons, and hence it is reasonable to expect that we must have a global in time dispersive estimate for the solution.

\subsection{The main result} In order to state the main result, we first need to introduce the vector field $L$, which is the  pushforward of $x$ along the linear Schr\"odinger flow, and thus it commutes with the linear flow. This is defined as
\begin{equation}
\label{def:L}
    L:=x+2it\px .
\end{equation}
The vector field $L$ is used to measure the initial data localization as well as its effect on the solution later in time.
Our main theorem is as follows:
\begin{thm}
\label{t:main}
    Let $u$ be a solution to \eqref{dnls} with small and localized data, i.e. for $\epsilon \ll 1,$
    \begin{equation}
        \|xu_0\|_{H^1}+\|u_0\|_{ L^2}\leq \epsilon
        \label{initials}
    \end{equation}
Then we have the following bound on $Lu$,
\begin{equation}
    \|Lu\|_{H^1_x}\lesssim \epsilon\, \langle t \rangle^{C\epsilon^2},
    \label{Lubound1}
\end{equation}
and the dispersive bounds
\begin{equation*}
    \|u(t,x)\|_{L^\infty_x} +  \|u_x(t,x)\|_{L^\infty_x}\lesssim \epsilon \, \langle t\rangle^{-\frac{1}{2}} \text{ for all } t\in \R.
    \label{dispersiveestimate1}
\end{equation*}
\end{thm}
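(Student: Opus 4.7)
The plan is to close a continuity/bootstrap argument on the two quantities in the theorem. On a maximal interval $[0,T]$ we postulate the a priori bounds
\begin{align*}
\|Lu(t)\|_{H^1_x} \le 2C\epsilon\, \langle t\rangle^{C\epsilon^2}, \qquad \|u(t)\|_{L^\infty_x}+\|u_x(t)\|_{L^\infty_x} \le 2C\epsilon\, \langle t\rangle^{-\frac12},
\end{align*}
and upgrade each to the same statement with the constant $2C$ replaced by $C$; local well-posedness and the standard symmetry of \eqref{dnls} under combined complex conjugation and time reflection then yield the theorem for all $t\in\mathbb{R}$. A high-regularity bound $\|u\|_{H^s} \lesssim \epsilon\,\langle t\rangle^{C\epsilon^2}$ is propagated in parallel by a standard $H^s$ energy estimate, since the bootstrap gives $\|u\|_{L^\infty_x}^2+\|u_x\|_{L^\infty_x}^2 \lesssim \epsilon^2/\langle t\rangle$, which is borderline integrable.

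For \eqref{Lubound1}, I would commute $L = x + 2it\partial_x$ through \eqref{dnls}. Since $L$ is the pushforward of $x$ along the linear Schr\"odinger flow, $[i\partial_t + \partial_x^2, L] = 0$, and a direct computation shows that $L$ distributes over the cubic term as a derivation up to zero-order errors, producing a right-hand side schematically of the form $\partial_x(|u|^2 Lu) + \partial_x(u^2\,\overline{Lu})$. An $H^1$ energy estimate then yields
\begin{align*}
\frac{d}{dt}\|Lu\|_{H^1}^2 \lesssim \bigl(\|u\|_{L^\infty_x}^2+\|u_x\|_{L^\infty_x}^2\bigr)\|Lu\|_{H^1}^2 \lesssim \epsilon^2\,\langle t\rangle^{-1}\|Lu\|_{H^1}^2,
\end{align*}
whose integration via Gr\"onwall produces precisely the $\langle t\rangle^{C\epsilon^2}$ growth and closes this half of the bootstrap.

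The delicate part is the sharp dispersive bound, because the Klainerman–Sobolev inequality $\|u\|_{L^\infty_x} \lesssim t^{-1/2}(\|u\|_{L^2_x}+\|Lu\|_{L^2_x})$ alone only yields the non-closable $\epsilon\,\langle t\rangle^{-1/2+C\epsilon^2}$. For this I invoke the testing-by-wave-packets method: define a wave packet $\Psi_v(t,x)$ that is an approximate solution of the linear Schr\"odinger equation, concentrated on the ray $x=vt$ with frequency $\xi=-v/2$, and set
\begin{align*}
\gamma(t,v) := \langle u(t,\cdot),\Psi_v(t,\cdot)\rangle_{L^2_x}.
\end{align*}
Two ingredients must then be combined: a pointwise representation $u(t,x) = t^{-1/2}\gamma(t,x/t) + O(t^{-3/4}\|Lu\|_{H^1_x})$, reducing the dispersive bound to uniform control of $\gamma$ in $v$; and an approximate ODE for $\gamma$, obtained by computing $\partial_t\gamma$, extracting the resonant part of the nonlinearity via stationary phase in $x$, and bounding the remainder using the $Lu$ control. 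The ODE takes the form
\begin{align*}
i\partial_t\gamma(t,v) = \frac{c(v)}{t}\,|\gamma(t,v)|^2\gamma(t,v) + \mathrm{err}(t,v), \qquad c(v)\in\mathbb{R},
\end{align*}
whose leading term preserves $|\gamma|$, so that Gr\"onwall on the error yields $|\gamma(t,v)|\lesssim \epsilon$ uniformly in $v$, closing the pointwise bound on $u$. A parallel analysis at the level of $\partial_v\gamma$, or of the wave-packet transform of $u_x$, gives the corresponding bound on $u_x$.

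The main obstacle, and the aspect advertised as novel, lies in implementing the wave-packet analysis for the derivative cubic nonlinearity of DNLS. Unlike pure cubic NLS, the extra $\partial_x$ in $-i\partial_x(|u|^2u)$ threatens to produce contributions of order $\xi|\gamma|^2\gamma/t$ in the computation of $\partial_t\gamma$ that would destroy the approximate conservation of $|\gamma|$ and thus preclude the sharp decay; ensuring that the coefficient $c(v)$ above is purely real will require a careful symmetrization that effectively exploits the gauge/Hamiltonian structure of DNLS encoded in the momentum functional $P(u)$. Bounding the non-resonant and commutator error terms so that they remain integrable in $t$ at the level of both $\gamma$ and $\partial_v\gamma$ is the most delicate technical step, and accounts for the rather high regularity threshold $s\geq 5$ assumed in \eqref{initials}.
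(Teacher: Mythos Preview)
Your outline matches the paper's strategy closely: bootstrap on the pointwise bounds for $u$ and $u_x$, an energy estimate for $Lu$ obtained by commuting $L$ through the equation and applying Gr\"onwall, wave-packet testing to produce an asymptotic profile $\gamma$ obeying a cubic ODE whose leading part conserves $|\gamma|$, and time-integrable error bounds driven by the $Lu$ control.

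Two places where your expectations diverge from what actually happens are worth flagging. First, the coefficient $c(v)$ in the asymptotic ODE comes out equal to $v/2$ automatically and is real without any appeal to gauge or Hamiltonian structure: after integrating $-i\partial_x(|u|^2u)\,\bar\Phi_v$ by parts, the derivative lands on $\bar\Phi_v = e^{-i\phi}\bar\chi$, and the dominant contribution $i\cdot(-i\phi_x) = x/(2t)\approx v/2$ is real because the $i$ in the nonlinearity cancels the $i$ from differentiating the phase. The ``careful symmetrization'' you anticipate is not needed. Second, for the $u_x$ bound the paper does not work with $\partial_v\gamma$ (which corresponds to $Lu$, not to $u_x$); instead it uses the relation $u_x \approx \tfrac{i}{2}t^{-1/2}e^{i\phi}\,v\gamma$ and bounds $\|v\gamma\|_{L^\infty}$ by multiplying the asymptotic ODE through by $v$ and estimating $\|vR\|_{L^\infty}$. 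It is this last step---extracting extra decay in $v$ from the linear error term by repeated integration by parts, trading powers of $v^{-1}$ for $H^k$ norms of $u$---that actually accounts for the regularity threshold $s\ge 5$, not any issue with the resonant coefficient. A minor omission: the equation for $Lu$ also carries an inhomogeneous source $|u|^2u$ coming from $[L,\partial_x]$, so the Gr\"onwall step is really a Bernoulli-type inequality, though the outcome is the same.
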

Here, $\langle \cdot \rangle$ are the usual Japanese brackets
\begin{align*}
    \langle t\rangle:=\left(1+t^2\right)^{\frac{1}{2}}.
\end{align*}

\begin{rmk}
    The constant $C$ in \eqref{Lubound1}  is  a large universal constant, which in particular does not depend on $\epsilon$.
\end{rmk}

The proof is completed in several steps. We first prove energy estimates for $u$ and also for $Lu$. If these estimates were 
uniform in time, then dispersive decay for the solutions would follow from  Klainerman-Sobolev type inequalities, as seen below for the linear case. However, the estimates we obtain for  $Lu$ do exhibit a slow growth in time, which 
 prevents such a direct argument.  We address this difficulty by making use of the testing by wave packets method of Ifrim and Tataru \cite{cite:WavePax}. This method  allows us to construct an \emph{asymptotic profile} for the solution, which in turn is an approximate solution for an \emph{asymptotic equation}. The global dispersive bounds are propagated in time exactly using this asymptotic equation.

 \subsection{Solitons}

The solitons provide the simplest obstruction to a global dispersive estimate for the nonlinear equation so it is natural to consider the following questions:

\begin{itemize}
   \item Do solitons exist?
     \item Do solitons exists within our class of initial data?
\end{itemize}

Broadly speaking, there are two cases to consider when proving dispersive decay bounds for a nonlinear equation. These two cases will depend on the existence of solitons. Solitons are stationary waves which do not change shape over time. 
If the equation admits solitons, then we have no hope of having a dispersive bound globally in time.  The existence of solitons depends completely on the assumptions we make on the initial data. 

1. If solitons exist, we do not expect the nonlinear solution to decay like the linear solution globally in time.  In this case, the best we can hope for is that the solution to the nonlinear equation decays like the solution to the linear equation up to some time $T$, depending on the initial data.  Then we would want to prove that the solitons emerge at that timescale, $T,$ and therefore the bound is optimal.  Some examples of this can be seen in  \cite{cite:kdv, cite:ILW, cite:BO} for KdV, Intermediate Long Wave, and Benjamin-Ono, respectively.  

2. If solitons do not exist, then we expect the nonlinear solution to decay like the linear solution globally in time.

We know that the \eqref{dnls} equation does have solitons.  From \cite{cite:1}, we know that the family of solitons for \eqref{dnls} takes the following form.  For $\theta\in (0,\frac{\pi}{2})$, we have the initial data
\begin{align*}
    q_0(x,\theta):=&\sqrt{2\sin (2\theta)}\frac{[\cos(\theta)\cosh(x)-i\sin(\theta)\sinh(x)]^3}{[\cos^2(\theta)\cosh^2(x)+\sin^2(\theta)\sinh^2(x)]^2}e^{-ix\cot(2\theta)}\\
    =&\sqrt{2\sin (2\theta)}\frac{\cosh^3(x-i\theta)}{|\cosh(x-i\theta)|^4}e^{-ix\cot(2\theta)},
\end{align*}
with  the soliton solution given by 
\begin{align*}
    q(t,x,\theta)=q_0(x+2\cot(2\theta)t,\theta)e^{it\csc^2(2\theta)}.
\end{align*}
The solitons can also be rescaled and translated. 

Here we choose the conditions on the initial data so that we are in the second case. It turns out there are no solitons which are small in $L^2$ and simultaneously localized in $H^1$.  
In \cite{cite:1}, they show that the $L^2$ norm of this soliton is 
\begin{align*}
    \|q_0\|_{L^2_x}^2=\|q\|_{L^2_x}^2=8\theta,
\end{align*}
which can be small in the critical norm $L^2$. The $L^2$ norm of $xq_0$ is also comparable to $\theta$. On the other hand the $L^2$ norm of the derivative of $q_0$ has size $1/\theta$, so the $L^2$ norm of $(xq_0)'$ has size $\sim\theta+ \frac{1}{\theta}$. In particular we have the scale invariant bound
\[
 \Vert xq_0\Vert_{H^1}\Vert q_0\Vert_{L^2}\geq 1,
\]
which is true not only for the above $q_0$ but also for all of its rescaled versions.  This shows that indeed, it is not possible for the solitons to be localized in $H^1$ and small in $L^2$.

\subsection{Acknowledgements}
\label{ss:history}
The author would like to thank Mihaela Ifrim (University of Wisconsin-Madison) for proposing this problem and for many illuminating discussions and helpful insights. The author was supported partially by NSF grant DMS-1928930 as well as NSF DMS-2037851.

\section{The Linear Case}
We will first recall the analysis of the dispersive decay properties for the linear equation, which will provide some insight into what we want to look for in the nonlinear case. 
Here we work with the linear Schr\"odinger equation, which is given by 
\begin{equation}\begin{cases}
    iu_t+u_{xx}=0\\
   u(0,x)=u_0(x).
\end{cases}
\label{Lineareq}
\end{equation}

One way to phrase the dispersive decay is in the form of an $L^1$ to $L^\infty$ bound  as follows: 
\begin{prop} For solutions $u$ to \eqref{Lineareq}, we have the following dispersive bound.
    \begin{equation*}
\|u(t,x)\|_{L^\infty_x}\lesssim t^{-\frac{1}{2}}\|u_0\|_{L^1}
\end{equation*}
for all $t\in \R$.
\end{prop}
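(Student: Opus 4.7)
The plan is to represent the solution via the explicit Schr\"odinger kernel and then estimate by Young's convolution inequality. Taking the spatial Fourier transform reduces \eqref{Lineareq} to the ODE $i\hat{u}_t-\xi^2\hat{u}=0$, whose solution is $\hat{u}(t,\xi)=e^{-it\xi^2}\hat{u}_0(\xi)$. Inverting the Fourier transform, I would write
\[
u(t,x)=\frac{1}{2\pi}\int_{\R}e^{ix\xi-it\xi^2}\hat{u}_0(\xi)\,d\xi=(K_t*u_0)(x),
\]
where $K_t$ is the convolution kernel whose Fourier transform is $e^{-it\xi^2}$.

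The key computation is to identify $K_t$ explicitly. Completing the square in the phase gives $x\xi-t\xi^2=-t(\xi-\tfrac{x}{2t})^2+\tfrac{x^2}{4t}$, which after a (formal) Gaussian integration suggests
\[
K_t(x)=\frac{1}{\sqrt{4\pi i t}}\,e^{ix^2/(4t)}.
\]
To make this rigorous, I would introduce a regularization $e^{-(\varepsilon+it)\xi^2}$, $\varepsilon>0$, compute the resulting honest Gaussian convolution kernel $(4\pi(\varepsilon+it))^{-1/2}e^{-x^2/(4(\varepsilon+it))}$, and then pass to the limit $\varepsilon\to 0^+$ (with a fixed branch of the square root) in the sense of tempered distributions. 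This yields $|K_t(x)|=(4\pi t)^{-1/2}$ for all $x\in\R$.

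With the kernel in hand, the dispersive estimate follows immediately from Young's inequality:
\[
\|u(t,\cdot)\|_{L^\infty_x}=\|K_t*u_0\|_{L^\infty_x}\le \|K_t\|_{L^\infty_x}\|u_0\|_{L^1}\lesssim t^{-1/2}\|u_0\|_{L^1}.
\]
One may argue the case $t<0$ identically, using $|t|^{-1/2}$, by symmetry of the equation under time reversal combined with complex conjugation.

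The only technical point, and the main thing to be careful about, is the justification of the formula for $K_t$: the defining oscillatory integral is not absolutely convergent, so a regularization (or equivalently a contour deformation in the complex $\xi$-plane) is required to rigorously evaluate the Gaussian integral with purely imaginary variance. Once $|K_t|_{L^\infty}\lesssim t^{-1/2}$ is established, the rest is routine.
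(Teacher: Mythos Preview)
Your proof is correct and follows essentially the same approach as the paper: Fourier transform to solve the linear ODE, identify the explicit convolution kernel of size $|t|^{-1/2}$, and conclude via Young's inequality. You are somewhat more careful than the paper in justifying the kernel formula by regularization, but the strategy is identical.
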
 
\begin{proof}
To prove the proposition we can explicitly find the fundamental solution and use that to find the decay estimate. Here we are using the following definition of the Fourier transform:
\begin{align*}
    \hat{f}(\xi):=\frac{1}{\sqrt{2\pi}}\int_{\R}e^{-ix\xi}f(x)\, dx.
\end{align*}
To solve the linear equation, we first take the Fourier transform to turn the PDE into a linear ODE:
\begin{align*}
    \hat{u_t}(\xi)+i\xi^2\hat{u}(\xi)=0,
\end{align*}
which has the solution 
\begin{align*}
    \hat{u}(t,\xi)=\hat{u_0}(\xi)e^{-i\xi^2t}.
\end{align*}
Therefore, we get the fundamental solution
\begin{align*}
    u(t,x) \approx&u_0*t^{-\frac{1}{2}} e^{-i\frac{x^2}{4t}}.
\end{align*} 
  Then, we can apply Young's inequality to obtain 
\begin{align*}
 \Vert u\Vert_{L^\infty}=  t^{-\frac{1}{2}} \| e^{-i\frac{x^2}{4t}}*u_0(x)\|_{L^\infty}\lesssim t^{-\frac{1}{2}}\| e^{-i\frac{x^2}{4t}}\|_{L^\infty}\|u_0\|_{L^1}\lesssim t^{-\frac{1}{2}}\|u_0\|_{L^1},
\end{align*}
which gives the needed bound.
\end{proof}

While very simple, the above argument is not very helpful when considering the nonlinear problem.  We now provide a second approach based on energy estimates which will serve as a guide for the proof of the nonlinear case. Here we will assume that the initial data is small and localized in $L^2$.  We  formulate the decay result as follows:

\begin{prop} For solutions $u$ to \eqref{Lineareq} with $\|u_0\|_{L^2}+\|xu_0\|_{L^2}\leq \epsilon$, we have the following dispersive bound:
    \begin{equation}
    \|u(t,x)\|_{L^\infty_x}\lesssim \epsilon t^{-\frac{1}{2}}.
\end{equation}
\end{prop}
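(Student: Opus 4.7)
The plan is to run an energy-based argument built around the vector field $L=x+2it\px$ defined in \eqref{def:L}. Because $L$ is the pushforward of multiplication by $x$ along the linear Schr\"odinger flow, it commutes with $i\pt+\px^2$; consequently, if $u$ solves \eqref{Lineareq}, then so does $Lu$, with initial data $(Lu)(0,x)=xu_0(x)$. Combined with the $L^2$ conservation law of the linear flow, this gives the two energy estimates
\begin{equation*}
\|u(t)\|_{L^2}=\|u_0\|_{L^2}\leq\epsilon,\qquad \|Lu(t)\|_{L^2}=\|xu_0\|_{L^2}\leq\epsilon,
\end{equation*}
uniformly in $t$. So far this only controls $L^2$-type norms, so the remaining task is to convert these into a pointwise decay estimate.

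The bridge is the conjugation identity
\begin{equation*}
Lu(t,x)=2it\,e^{ix^2/(4t)}\,\px\!\left(e^{-ix^2/(4t)}u(t,x)\right),
\end{equation*}
which one checks by expanding the right-hand side. Setting $v(t,x):=e^{-ix^2/(4t)}u(t,x)$, the identity reads $\px v=\frac{1}{2it}\,e^{-ix^2/(4t)}Lu$, and since $|v|=|u|$, the bounds above translate to
\begin{equation*}
\|v(t)\|_{L^2}\leq\epsilon,\qquad \|\px v(t)\|_{L^2}\leq \frac{\epsilon}{2|t|}.
\end{equation*}
A standard one-dimensional Gagliardo--Nirenberg (Sobolev) inequality $\|v\|_{L^\infty}\lesssim \|v\|_{L^2}^{1/2}\|\px v\|_{L^2}^{1/2}$ then yields
\begin{equation*}
\|u(t,\cdot)\|_{L^\infty_x}=\|v(t,\cdot)\|_{L^\infty_x}\lesssim \epsilon^{1/2}\cdot\left(\frac{\epsilon}{2|t|}\right)^{1/2}\lesssim \epsilon\,|t|^{-1/2},
\end{equation*}
which is the required dispersive bound for $|t|\geq 1$; the range $|t|\lesssim 1$ is handled directly by Sobolev embedding $H^1\hookrightarrow L^\infty$ and the $H^1$ conservation for the linear flow (or by noting that the hypothesis bounds $u$ at $t=0$ and the flow preserves regularity on bounded time intervals).

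There is no real obstacle here; the only non-obvious ingredient is the conjugation identity above, which is the prototype of the Klainerman--Sobolev estimate in this setting and which will serve as the blueprint for the nonlinear analysis later, once we have suitable (possibly slowly growing) control of $\|Lu\|_{L^2}$ as in \eqref{Lubound1}.
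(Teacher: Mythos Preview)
Your proof is correct and follows essentially the same route as the paper: both derive the Klainerman--Sobolev bound $\|u\|_{L^\infty}^2 \lesssim t^{-1}\|u\|_{L^2}\|Lu\|_{L^2}$ from the conserved $L^2$ norms of $u$ and $Lu$, your conjugation identity $Lu = 2it\,e^{i\phi}\partial_x(e^{-i\phi}u)$ being exactly equivalent to the paper's direct computation of $\frac{d}{dx}|u|^2 = \frac{1}{2ti}[\bar u\,Lu - u\,\overline{Lu}]$ followed by integration. One minor remark: your small-$|t|$ discussion is both unnecessary (the argument already yields the bound for every $t\neq 0$, and the stated estimate is vacuous as $t\to 0$) and slightly off, since the hypothesis $\|u_0\|_{L^2}+\|xu_0\|_{L^2}\leq\epsilon$ gives no $H^1$ control on $u_0$.
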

\begin{proof}
We use the operator $L$, defined in \eqref{def:L}. This operator has the following properties:
\begin{align*}
    \left[ \partial_x , L\right]=1,\quad \left[ i\partial_t+\partial_x^2, L\right] =0.
\end{align*}
If $u$ solves the linear equation \eqref{Lineareq}, then $Lu$ solves it too. This can also be seen by using the scaling derivative as follows  
\begin{align*}
     \frac{d}{d\lambda}u_\lambda(t,x)|_{\lambda=1}
     =&\frac{1}{2}u+xu_x+2tiu_{xx}=\partial_x ( x u + 2 ti u_{x} ) -\frac{1}{2}u.
\end{align*}
This is a solution of \eqref{Lineareq}, and therefore $Lu=xu+2tiu_x$ will also be a solution.  

Using conservation of mass, we see that $\|u\|_{L^2}\leq \epsilon$, and since $Lu$ is also a solution to the linear equation, we also have $\|Lu\|_{L^2}=\|Lu(0)\|_{L^2}=\|xu_0\|_{L^2}\leq \epsilon$. So we have
\begin{equation}
\label{L-estimate}
\Vert  u(t)\Vert_{L^2} + \Vert L u(t)\Vert_{L^2} \leq \epsilon.
\end{equation}
    
The main idea now is to write $\frac{d}{dx}|u|^2$ in terms of the $L^2_x$ norms of $u$ and $Lu$, which we can control.  By applying $L$ to $u$ and solving for $u_x,$ we get
 \begin{align*}
u_x= \frac{1}{2t i}[Lu-xu].
\end{align*}
Then, we obtain 
\begin{align*}
    \frac{d}{dx}|u|^2
    =u\bu_x+\bu u_x=\frac{1}{2t i}[\bu Lu -u\widebar{Lu}].
\end{align*}
Integrating this gives
\begin{align*}
 \|u\|_{L^\infty}^2  &\lesssim\frac{1}{t}\int_{\R} |u||Lu| \, dx \lesssim \frac{1}{t}\|Lu\|_{L^2}\|u\|_{L^2}
\end{align*}
Combining this with the energy bounds \eqref{L-estimate} we obtain the pointwise bound 
\begin{equation}
\|u\|_{L^\infty}\lesssim \epsilon t^{-\frac{1}{2}}.
\end{equation}
\end{proof}
We remark on an intermediate step in the above proof, which is to establish the estimate
\begin{equation}
\|u(t)\|_{L^\infty}^2 \lesssim
\frac{1}{t} \|u(t)\|_{L^2} \|Lu(t)\|_{L^2}.
\end{equation}
This is what we call a \emph{vector field bound}, or a Klainerman-Sobolev type inequality. It no longer depends on the fact that $u$
solves the linear Schr\"odinger equation, which is why in some cases, it also can be used for the nonlinear problem.

\section{Nonlinear Analysis: an outline of the proof\label{s:proofoutline}}
The vector field $L$ helped us prove the dispersive decay bound for the linear equation, so we might expect either $L$ or a nonlinear counterpart to $L$, called $L^{NL}$, will help us with the nonlinear equation.  In the analysis of the cubic NLS flow in \cite{cite:NLS} the same operator $L$ was used. On the other hand, in \cite{cite:kdv, cite:BO} the nonlinear counterpart $L^{NL}$, defined as the scaling derivative of a solution $u$ was the key to proving the dispersive estimate. Finally, in \cite{cite:WavePax} a more systematic approach was introduced in order to construct $L^{NL}$ where there is no scaling symmetry.  However, in our case, because of the nature of the equation, we revert to using the linear $L$ which will turn out to suffice.  We remark that in our setting one can also define  a nonlinear operator $L^{NL}$ as the scaling derivative of the solution but using it does not give significant progress towards the proof.

\medskip

The main idea of the proof is to use the testing by wave packets method, which was introduced by Ifrim and Tataru in \cite{cite:NLS}, and is explained in general in \cite{cite:WavePax}. In both of these papers, as well as in several others, this method  has been used to prove in tandem both global well-posedness and dispersive decay estimates.  The idea here is to construct  well-prepared  approximate solutions, called wave packets, for the linear equation, which are localized in both space and frequency, and travel along rays. These are then used in order to construct a good notion of an asymptotic profile as the  $L^2$ inner product of the wave packet with our solution.  We will prove bounds for the asymptotic profile, via an appropriate asymptotic equation, and then these bounds will help us get bounds for our solution.

Our proof of Theorem \ref{t:main} consists of the following steps:

\medskip
\noindent { \bf I. Find the wave packets.} In this step we will introduce the wave packets ${\Phi}_v$ associated with each ray $x=vt$, which can be defined in a canonical way.  These wave packets are nice approximate solutions to the linear equation, and will be very useful to help prove bounds for the nonlinear solution. 
We will then define the asymptotic profile 
\[
\gamma(t,v):=\int u(t,x) \widebar{\Phi}_v(x)\, dx
\]
as the $L^2_x$ inner product with our solution. This will be done in Section \ref{wavepax}.

\medskip
{\bf II. Difference bounds.}
The next step is to show that the
asymptotic profiles provide a good description of our nonlinear solution $u$ as $t \to \infty$. Precisely, we will prove the following Lemma:
\begin{lemma}
\label{gammalemma}
For $u$ a solution to \eqref{dnls}, and $\gamma$ as defined above, we have
    \begin{equation}
     \|\gamma\|_{L^\infty}\lesssim t^{\frac{1}{2}}\|u\|_{L^\infty}, \,\,\,\,\|\gamma\|_{L^2_v}\lesssim \|u\|_{L^2_x}, \,\,\,\, \|\partial_v \gamma \|_{L^2_v}\lesssim \|Lu\|_{L^2_x}.
        \label{gammabnds}
    \end{equation}
Also,
\begin{align}\label{v gamma L^infty}
  \|v\gamma\|_{L^\infty}\lesssim t^{-\frac{3}{4}}\|Lu\|_{L^2_x}+ t^{\frac{1}{2}}\|u_x\|_{L^\infty}  ,  \,\,\,\,\ \|\langle v \rangle^{\frac{k}{2}}\gamma\|_{L^\infty}\lesssim (\|Lu\|_{L^2_x} + \|u\|_{H_x^{k}}).
\end{align}
We have the following spatial difference bounds,

\begin{align}
 \|u(t,vt)-t^{-\frac{1}{2}}e^{i\phi(t,vt)}\gamma(t,v)\|_{L^\infty}\lesssim& t^{-\frac{3}{4}}\|Lu\|_{L^2_x}
      \label{spatialbnds}\\
    \|u(t,vt)-t^{-\frac{1}{2}}e^{i\phi(t,vt)}\gamma(t,v)\|_{L^2_v}\lesssim& t^{-1}\|Lu\|_{L^2_x},
    \label{spatialL^2}
\end{align}
spatial bounds for the derivative of our solution,
\begin{align}
 \left\|u_x(t,vt)-\frac{i}{2}t^{-\frac{1}{2}}e^{i\phi(t,vt)}v\gamma(t,v)\right\|_{L^\infty}\lesssim& t^{-\frac{3}{4}}\left[\|Lu\|_{L^2_x}+\|L(u_x)\|_{L^2_x}\right]
   \label{spatialu_xbnds}
\end{align}
and Fourier bounds,
\begin{align}
  \|\hat{u}(t,\xi)-e^{-it\xi^2}\gamma(t,2\xi)\|_{L^\infty}  \lesssim
  & t^{-\frac{1}{4}}\|Lu\|_{L^2_x}
  \label{fourierbnds1}\\
    \|\hat{u}(t,\xi)-e^{-it\xi^2}\gamma(t,2\xi)\|_{L^2_\xi}\lesssim
    & t^{-\frac{1}{2}}\|Lu\|_{L^2_x}.
    \label{fourierbnds}
\end{align}
    \label{diffLemma}
\end{lemma}
\begin{rmk}
    Note that the Fourier difference bounds and the spatial $L^2$ bounds are not essential to the proof of the main theorem. However, they help to show that the solution is approximated well by the asymptotic profile $\gamma$, both on the physical side and the Fourier side.
\end{rmk} 

This lemma will allow us to transfer uniform bounds between the solution $u$ and its asymptotic profile $\gamma$.
The proof is given in Section \ref{difference}.

\medskip

{\bf III. The asymptotic equation.}
The key to proving uniform bounds for $\gamma$ as the time goes to infinity is to establish  approximate ODE dynamics for $\gamma$, which we call the asymptotic equation. The precise result is given by the following lemma:
\begin{lemma}
\label{l:approx}
Let $u$ be a solution to \eqref{dnls}. Then $\gamma$ satisfies
\begin{align}
\label{gamma asym}
    i\gamma_t(t)=\frac{v}{2}t^{-1}(|\gamma|^2\gamma)-R(t,v),
\end{align}
where the remainder $R$ satisfies the bounds
\begin{equation}
\begin{aligned}
     \|R\|_{L^\infty}\lesssim & \ t^{-\frac{5}{4}}\|Lu\|_{L^2_x}+\|u\|_{L^\infty}^3 +t^{-\frac{3}{4}}\|u\|_{L^\infty}^2 \|Lu\|_{L^2_x}+  \|u\|_{L^\infty} \|u_x\|_{L^\infty}t^{-\frac{1}{4}}\|Lu\|_{L^2_x}\\
    &+ t^{-\frac{3}{4}}\|u\|_{L^\infty}\|Lu\|_{L^2_x}\lp t^{-\frac{3}{4}}\|Lu\|_{L^2_x}+ t^{\frac{1}{2}}\|u_x\|_{L^\infty}\rp.\label{Rbound0}\\
    \end{aligned}
    \end{equation}
\begin{equation}
  \begin{aligned}  
 \|vR\|_{L^\infty}\lesssim &    t^{-\frac{5}{4}}\lp\|Lu\|_{L^2_x} +\|Lu_x\|_{L^2_x}  \rp +t^{-\frac{5}{4}}\|u\|_{L^\infty}^2 \|Lu\|_{L^2_x}+\|u\|_{L^\infty}^2 \|u_x\|_{L^\infty}\\\
    & + \|u\|_{L^\infty}\left[ t^{-\frac{3}{2}}\|Lu\|_{L^2}^2+t^{-\frac{3}{4}}\|u_x\|_{L^\infty}\|Lu\|_{L^2_x}+ t^{-\frac{1}{4}}\|u_x\|_{L^\infty}\|Lu_x\|_{L^2_x}\right] \\
    & +(t^{-\frac{3}{4}}\|Lu\|_{L^2_x}+ t^{\frac{1}{2}}\|u_x\|_{L^\infty})\|u\|_{L^\infty}\lp t^{-\frac{5}{4}}\|(Lu)_x\|_{L^2_x}+t^{-\frac{3}{4}}\|Lu_x\|_{L^2_x} \rp 
    \label{Rbound1}. 
    \end{aligned}
   \end{equation}
\label{ODELemma}
\end{lemma}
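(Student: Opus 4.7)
The plan is to derive the asymptotic ODE for $\gamma$ by differentiating its defining integral in time, substituting the \eqref{dnls} equation, and then exploiting the fact that $\Phi_v$ is a good approximate solution of the linear Schr\"odinger equation. Writing
\[
i\gamma_t = \int i u_t \,\overline{\Phi}_v\,dx + \int u\, \overline{i\partial_t \Phi_v}\,dx
\]
and using $iu_t = -u_{xx} - i\partial_x(|u|^2 u)$, one integration by parts in $x$ on the first term gives $-\int u\, \overline{\partial_x^2 \Phi_v}\,dx$. Paired against $i\partial_t \Phi_v \approx -\partial_x^2 \Phi_v$ from the wave packet construction, the linear contributions cancel up to an error controlled by $\|(i\partial_t+\partial_x^2)\Phi_v\|$ in a suitable norm, which by the canonical choice of $\Phi_v$ is small.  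What remains at leading order is
\[
i\gamma_t \approx -i\int \partial_x(|u|^2 u)\,\overline{\Phi}_v\,dx = i \int |u|^2 u \,\overline{\partial_x \Phi_v}\,dx.
\]

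Next I would extract the leading order of the nonlinear integral. Since $\Phi_v$ is localized near $x=vt$ with carrier frequency $\xi = v/2$, one has $\partial_x \Phi_v \approx \tfrac{iv}{2}\Phi_v$ modulo errors involving the slowly varying envelope of the wave packet. Combined with the pointwise approximation $u(t,vt)\approx t^{-1/2}e^{i\phi(t,vt)}\gamma(t,v)$ from Lemma \ref{diffLemma}, which turns $|u|^2$ into $t^{-1}|\gamma|^2$ on the support of $\Phi_v$, this yields
\[
i\gamma_t \approx \tfrac{v}{2}\, t^{-1} |u(t,vt)|^2 \int u\,\overline{\Phi}_v\,dx = \tfrac{v}{2}\,t^{-1}|\gamma|^2\gamma,
\]
which is precisely the claimed asymptotic equation. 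The remainder $R$ naturally splits into three pieces: (i) the linear wave packet defect $\int u\,\overline{(i\partial_t+\partial_x^2)\Phi_v}\,dx$, (ii) the error in replacing $\partial_x \Phi_v$ by $\tfrac{iv}{2}\Phi_v$, producing an envelope-derivative term which after one integration by parts involves $u_x$ and an extra $t^{-1/2}$ decay, and (iii) the error in replacing $|u(t,x)|^2u(t,x)$ by $|u(t,vt)|^2 u(t,x)$ on the support of $\Phi_v$, which by Taylor expansion is controlled via the spatial $L^\infty$ and $L^2_v$ difference bounds \eqref{spatialbnds}--\eqref{spatialu_xbnds} from Lemma \ref{diffLemma}.

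To prove the $L^\infty$ bound \eqref{Rbound0} I would apply H\"older to each of the three error pieces: (i) is estimated using the $\Phi_v$ symbol bounds together with $\|u\|_{L^2}$ and $\|Lu\|_{L^2}$, which gives the $t^{-5/4}\|Lu\|_{L^2}$ term; (ii) is a cubic expression in $u$ multiplied by an envelope-derivative, yielding the $\|u\|_{L^\infty}^3$ and mixed $t^{-1/4}\|u\|_{L^\infty}^2(\|Lu\|_{L^2}+\|u\|_{H^k})$ terms after trading one $L^2$ factor into either $Lu$ or a derivative of $u$; (iii) uses the difference $|u(t,x)|^2-|u(t,vt)|^2$ which is bounded in terms of $u_x$ and $|x-vt|$, generating the $t^{-1/4}\|u\|_{L^\infty}^2\|Lu\|_{L^2}(\|Lu\|_{L^2}+\|u\|_{H^2})$ contribution through the wave packet localization scale $|x-vt|\lesssim t^{1/2}$.

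The bound \eqref{Rbound1} on $vR$ is the main obstacle: the extra factor of $v$ does not come for free and must be paid for by additional regularity or localization. The key trick is to rewrite $v\Phi_v$, using the relation between the carrier frequency $v/2$ of the wave packet and its spatial derivative, as $v\Phi_v \approx -2i\partial_x \Phi_v + (\text{envelope})$, so that multiplying $R$ by $v$ and integrating by parts transfers a derivative onto $u$ or onto $|u|^2 u$, at the cost of one factor of $u_x$ or one Sobolev index. Carefully iterating this identity on each of the three error pieces, and balancing between the ``keep derivative on $u$'' branch and the ``keep derivative on $\Phi_v$'' branch via a Littlewood-Paley style split at the optimal frequency $t^{-1/5}$ (which is the source of the exotic exponents $t^{-13/12}$ and $t^{-3/20}$), produces exactly the mixture of norms appearing in \eqref{Rbound1}. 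The bookkeeping of Sobolev indices in this last step is the delicate part, and it is the reason $s\geq 5$ (rather than $s\geq 1/2$) enters the hypotheses of Theorem \ref{t:main}.
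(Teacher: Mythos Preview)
Your overall strategy and decomposition match the paper: differentiate $\gamma$, cancel the linear part via the wave-packet defect, and split the remainder into the linear defect ($R_1$), the envelope-derivative piece ($R_2$), and the error in replacing $|u(t,x)|^2$ by $t^{-1}|\gamma|^2$ (the paper further splits this last part into $R_3$ and $R_4$). Your attribution of the $t^{-5/4}\|Lu\|_{L^2}$ and $\|u\|_{L^\infty}^3$ terms is correct, and your identity $v\Phi_v\approx -2i\partial_x\Phi_v$ is indeed the right device for trading a factor of $v$ for a derivative.

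The gap is in how the exponents $t^{-1/4}$, $t^{-13/12}$, $t^{-3/20}$ actually arise. There is no Littlewood--Paley or frequency split at $t^{-1/5}$ in the argument. Instead, for each $R_j$ one proves \emph{two} separate pointwise bounds and then takes their weighted geometric mean, the weight being chosen so that the $v$-dependence cancels. One bound has good $t$-decay but grows in $v$ (coming from the $Lu$-based difference estimates); the other has $\langle v\rangle$-decay but weaker $t$-decay, obtained either by iterating integration by parts against the phase $e^{i\phi}$ (equivalently, iterating your identity $k$ times) at the cost of $\|u\|_{H^k}$, or directly from \eqref{gammabnds2}. Concretely: for $vR_1$ the pair is $t^{-5/4}$ versus $\langle v\rangle^{-3}t^{-3/4}$, and weights $\tfrac{2}{3},\tfrac{1}{3}$ give $\langle v\rangle^{-1}t^{-13/12}$; for $vR_3$ the pair is $\langle v\rangle^{2}t^{-3/4}$ versus $\langle v\rangle^{-1/2}$ (the latter from \eqref{gammabnds2}), and weights $\tfrac{1}{5},\tfrac{4}{5}$ give $t^{-3/20}$. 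The same device (weights $\tfrac{2}{3},\tfrac{1}{3}$ on $v^{-1/2}$ versus $v\,t^{-3/4}$) already produces the $t^{-1/4}$ for $R_3$ in \eqref{Rbound0}, which your sketch of piece~(iii) also does not capture: bounding $||u(t,x)|^2-|u(t,vt)|^2|$ by the mean value theorem alone leaves the prefactor $v/2$ in $R_3$ uncontrolled, and this growth must be balanced against the $\langle v\rangle$-decay supplied by \eqref{gammabnds2}.
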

The proof of this Lemma is given in Section \ref{ODE}.

\medskip

{ \bf IV. The bootstrap setup. }
In order to complete the proof of our main result, it will be necessary to use a bootstrap argument.   We will make the bootstrap assumptions
\begin{align}
\label{bootstrap}
    \|u\|_{L^\infty}\leq D\epsilon  \langle t\rangle^{-\frac{1}{2}}, \qquad  \|u_x\|_{L^\infty}\leq D\epsilon  \langle t\rangle^{-\frac{1}{2}},
\end{align}
where $D$ is a large universal constant  to be be chosen later.
Then our task will be to improve 
this constant, under the assumption that $\epsilon$ is small enough. This can be done on an arbitrarily large time interval $[0,T]$ where the solution $u$ exists, where at the end we can let $T \to \infty$ via a continuity argument.

\medskip

{ \bf V. Energy bounds for $u$ and  $Lu$.} These energy bounds are 
critical in order to both estimate the approximation errors in Lemma~\ref{l:approx} and the asymptotic equation errors in Lemma~\ref{gammalemma}. 
They are as follows:
\begin{lemma}
 \label{l:energy est u}
    Let $u$ be  a solution to the DNLS equation with small initial data, $\Vert u_0\Vert_{H^k}\leq \epsilon \ll 1$, for some $k\geq 0.$ Then the solution satisfies the global energy bounds
\begin{equation}
\Vert u\Vert_{H^k}\lesssim \epsilon.
\end{equation}
\end{lemma}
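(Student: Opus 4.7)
The plan is to establish $\|u\|_{H^k}\lesssim \epsilon$ incrementally in $k$, using the DNLS conservation laws at low regularity and a gauge-transformed energy estimate at higher regularity.

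First, the $L^2$ bound is immediate from mass conservation: $\|u(t)\|_{L^2}=\|u_0\|_{L^2}\leq \epsilon$. For $k=1$ I would exploit the conserved energy $E(u)=\|u_x\|_{L^2}^2-\frac{3}{2}\int |u|^2\I(\bar u u_x)\,dx+\frac{1}{2}\|u\|_{L^6}^6$. Using Gagliardo--Nirenberg together with the $L^2$ bound just obtained, the two lower-order terms are dominated by higher powers of $\|u\|_{H^1}$, so a standard small-data continuity argument closes the estimate and yields $\|u_x\|_{L^2}\lesssim \epsilon$.

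The hard part is $k\geq 2$, where direct energy estimates fail: differentiating the nonlinearity $-i\px(|u|^2 u)$ $k$ times and testing against $\px^k\bar u$ produces a top-order contribution $\I\int |u|^2\px^{k+1}u\cdot\px^k\bar u\,dx$ which cannot be absorbed via integration by parts alone, because $|u|^2$ is real and the complex structure does not give the required cancellation. To overcome this I would introduce a Hayashi--Ozawa style gauge $v:=e^{-i\int_{-\infty}^x |u|^2\,dy}\,u$; a direct computation shows $v$ satisfies a modified equation of the form $iv_t+v_{xx}=iv^2\bar v_x+|v|^4 v$, where the remaining derivative in the quadratic term falls on $\bar v$ rather than on $v$. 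The worst top-order contribution in $\frac{d}{dt}\|\px^k v\|_{L^2}^2$ then becomes $\Rea\int v^2\px^{k+1}\bar v\cdot\px^k\bar v\,dx$, and one integration by parts converts this into $-\int vv_x(\px^k\bar v)^2\,dx$, which is controlled by $\|v\|_{L^\infty}\|v_x\|_{L^\infty}\|\px^k v\|_{L^2}^2$. Sobolev embedding gives $\|v\|_{L^\infty}\lesssim \|v\|_{H^1}$ and $\|v_x\|_{L^\infty}\lesssim \|v\|_{H^k}$ for $k\geq 2$, and combined with the $H^1$ bound already in hand, a Gronwall-type continuity argument closes the estimate inductively in $k$. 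Since the gauge factor has modulus one and its derivatives are smooth expressions in $u$ and $|u|^2$, the norms $\|v\|_{H^k}$ and $\|u\|_{H^k}$ are comparable for small data, yielding the claimed bound on $u$. The central obstacle throughout is the derivative loss in the nonlinearity, and the gauge transformation is exactly what is needed to rearrange the equation so that energy estimates close at top order.
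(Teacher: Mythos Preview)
Your argument through $H^1$ is sound: mass conservation gives the $L^2$ bound, and the conserved energy together with Gagliardo--Nirenberg and smallness gives the $H^1$ bound. The paper takes a different route and simply cites the well-posedness literature --- Takaoka for $k\geq\tfrac12$ (Lipschitz dependence of the data-to-solution map) and Harrop-Griffiths, Killip, Ntekoume, Vi\c{s}an for $0\leq k<\tfrac12$ --- so at low regularity your approach is in fact more self-contained than the paper's.

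The gap is at $k\geq 2$. After the gauge transformation and one integration by parts, your top-order contribution is bounded by $\|v\|_{L^\infty}\|v_x\|_{L^\infty}\|\partial_x^k v\|_{L^2}^2$, but only the first factor is uniformly controlled from the $H^1$ step; the factor $\|v_x\|_{L^\infty}$ requires $H^2$ control, which is precisely the quantity being estimated. Interpolating $\|v_x\|_{L^\infty}\lesssim \|v_x\|_{L^2}^{1/2}\|v_{xx}\|_{L^2}^{1/2}\lesssim \epsilon^{1/2}\|v\|_{H^2}^{1/2}$ gives a differential inequality of the type $\frac{d}{dt}\|v\|_{H^2}^2\lesssim \epsilon^{3/2}\|v\|_{H^2}^{5/2}$, and no Gr\"onwall or continuity argument turns this into a \emph{uniform-in-time} bound; it only produces growth. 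The global $H^k$ estimate for $k\geq 2$ is a genuinely nontrivial fact about DNLS that rests on the infinite hierarchy of conserved quantities coming from complete integrability, and this is what the results the paper cites ultimately use. The paper itself flags this point in the remark immediately following the lemma: a direct energy estimate, even with the bootstrap decay hypotheses $\|u\|_{L^\infty},\|u_x\|_{L^\infty}\lesssim \epsilon\langle t\rangle^{-1/2}$ in hand, yields only $\|u\|_{H^k}\lesssim \epsilon\langle t\rangle^{C\epsilon^2}$. That weaker bound happens to suffice for the main theorem, but it is not the statement of the lemma, and in any case you do not have the pointwise decay available at this stage of the argument.
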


\begin{lemma}
    \label{LuBound}
    Let $L$ be the operator defined in \eqref{def:L}. Then, for any solution $u$ to \eqref{dnls} satisfying \eqref{initials} and \eqref{bootstrap}, we have
    \begin{align*}
    \|Lu\|_{L^2_x}\lesssim & \ \epsilon\langle t\, \rangle^{\frac{D^2\epsilon^2}{2}},\\
        \|L(u_x)\|_{L^2_x}\lesssim&  \ \epsilon\, \langle t\rangle^{\frac{D^2\epsilon^2}{2}}.
\end{align*}
\end{lemma}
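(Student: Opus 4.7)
The plan is to apply the vector field $L$ to \eqref{dnls} and perform a direct energy estimate for $w := Lu$, exploiting that $L$ commutes with the linear Schr\"odinger operator $i\partial_t + \partial_x^2$ and satisfies $[\partial_x, L] = 1$. The crucial algebraic identity is
\begin{equation*}
L(|u|^2 u) \;=\; 2|u|^2 Lu - u^2 \overline{Lu},
\end{equation*}
which follows by substituting $xu = Lu - 2itu_x$ and $x\bar u = \overline{Lu} + 2it\bar u_x$ into $L(u^2\bar u) = xu^2\bar u + 2it\partial_x(u^2\bar u)$ and observing that all explicit $t$-factors cancel, leaving a purely cubic expression in $u$ and $Lu$. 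Combined with $L\partial_x = \partial_x L - 1$, this produces the equation
\begin{equation*}
iw_t + w_{xx} \;=\; -i\partial_x\bigl[2|u|^2 w - u^2\bar w\bigr] + i|u|^2 u,
\end{equation*}
whose right-hand side is purely cubic with no explicit $t$- or $x$-dependence.

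For the first bound, I would compute the $L^2$ energy identity $\tfrac{1}{2}\tfrac{d}{dt}\|w\|_{L^2}^2 = \I\int \bar w \cdot \textrm{RHS}\,dx$. After integration by parts, each of the quasilinear contributions $-2i\partial_x(|u|^2 w)$ and $i\partial_x(u^2\bar w)$ reduces to symmetrization remainders involving $(|u|^2)_x|w|^2$ or $(u^2)_x\bar w^2$, each bounded by $\|u\|_{L^\infty}\|u_x\|_{L^\infty}\|w\|_{L^2}^2 \lesssim D^2\epsilon^2\langle t\rangle^{-1}\|w\|_{L^2}^2$ via the bootstrap \eqref{bootstrap}, while the cubic source $i|u|^2 u$ contributes $\lesssim \|u\|_{L^\infty}^2\|u\|_{L^2}\|w\|_{L^2}\lesssim D^2\epsilon^3\langle t\rangle^{-1}\|w\|_{L^2}$, using $\|u\|_{L^2}\lesssim \epsilon$ from Lemma~\ref{l:energy est u}. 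Gr\"onwall's inequality applied to $\|w\|_{L^2}^2 + \epsilon^2$ then yields $\|Lu\|_{L^2}\lesssim \epsilon\langle t\rangle^{D^2\epsilon^2/2}$, with the precise exponent $D^2\epsilon^2/2$ recovered by tracking numerical constants carefully.

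For the second bound, I would exploit the commutator identity $L u_x = \partial_x(Lu) - u = \partial_x w - u$, which reduces the task to controlling $\|\partial_x w\|_{L^2}$, i.e.\ to an $H^1$ energy estimate on $w$. Differentiating the $w$-equation and running the analogous argument for $W := \partial_x w$, the highest-derivative terms $-2i|u|^2 W_x$ and $iu^2\bar W_x$ are once more absorbed by integration by parts into $(|u|^2)_x|W|^2$-type expressions. The expected main obstacle is the coupling term $\int (|u|^2)_{xx}\bar W w\,dx$: the factor $(|u|^2)_{xx}$ contains $\bar u u_{xx}$, for which \eqref{bootstrap} gives no direct time decay, so a naive estimate based on $\|u\|_{H^k}\lesssim \epsilon$ would only yield polynomial growth in $t$. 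I would handle this via a normal-form argument: substitute $u_{xx} = -iu_t - i\partial_x(|u|^2 u)$ from \eqref{dnls} itself and integrate by parts in time. The boundary terms at $t=0$ and $t$ are controlled by the bootstrap and the $L^2$ bound on $w$ just established, while the bulk terms produced when the time derivative falls on $u$, $\bar W$, or $w$ are of higher multilinear order in the bootstrap variables and hence admissible. The remaining pieces of $(|u|^2)_{xx}$, namely $2|u_x|^2$ and $2|u|^2\,\I(\bar u u_x)$, already decay like $\langle t\rangle^{-1}$ from the bootstrap, and a Gr\"onwall argument of the same shape as in the $L^2$ case delivers $\|\partial_x w\|_{L^2}\lesssim \epsilon\langle t\rangle^{D^2\epsilon^2/2}$; combining with $\|u\|_{L^2}\lesssim \epsilon$ gives the stated bound on $\|L u_x\|_{L^2}$.
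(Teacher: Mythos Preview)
Your derivation of the $w=Lu$ equation and the $L^2$ energy estimate for it are the same as the paper's, and lead to the first bound in the same way. For the second bound the paper takes exactly the route you outline: it writes down the equation for $z_2:=L(u_x)=\partial_x w-u$ and then simply asserts that ``by similar computations of the energy estimate, we end up with the same bound,'' without singling out the coupling term $-i\partial_x\bigl[2(|u|^2)_x\,Lu-2uu_x\,\overline{Lu}\bigr]$ for any special treatment. So you and the paper agree on the overall strategy; the paper just does not engage with the obstacle you raise.

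That said, your proposed normal-form handling of the coupling has a gap. After you substitute $u_{xx}=-iu_t-i\partial_x(|u|^2u)$ into the offending term $\int\bar W\,\bar u\,u_{xx}\,w\,dx$ and integrate the resulting $u_t$ by parts in time, the bulk term in which $\partial_t$ lands on the surviving $\bar u$ factor returns, via $\bar u_t=-i\bar u_{xx}+(\text{cubic})$, a contribution $\int\bar W\,u\,\bar u_{xx}\,w\,dx$ of exactly the \emph{same} quartic type, not one of higher multilinear order as you assert; only the cubic part of $\bar u_t$ produces a genuinely quintic (hence admissible) remainder. Likewise, the bulk terms in which $\partial_t$ lands on $\bar W$ or on $w$ convert, through the linear Schr\"odinger part of their evolution equations, into factors $\bar W_{xx}$ or $w_{xx}=\partial_x W$ that sit one derivative above the $H^1$ level you are trying to close. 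A single integration by parts in time therefore does not remove the bad term; a working modified-energy correction here has to be chosen so that the Schr\"odinger evolutions of all four factors combine to cancel the second-derivative contribution, which is more delicate than what you sketch.
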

We remark here that while the energy bounds for $u$ are uniform in time, a slight loss in the energy bounds for $Lu$ is necessary. The proof of this lemma is given in Section \ref{boundsLU}. 

\medskip

{ \bf VI. Closing the bootstrap argument and the conclusion of the proof. } The final step in our analysis is to improve the bootstrap bound. We state this in the following lemma, whose proof uses
all of the previous lemmas.

\begin{lemma}
\label{finishingprooflemma}
    Let $u$ be a solution to \eqref{dnls} satisfying \eqref{initials} and \eqref{bootstrap}. Then we have 
\begin{align*}
     \|u\|_{L^\infty}\leq E\epsilon  \langle t\rangle^{-\frac{1}{2}}, \qquad 
          \|u_x\|_{L^\infty}\leq F\epsilon  \langle t\rangle^{-\frac{1}{2}},
\end{align*}
    where $E,F<D$.
\end{lemma}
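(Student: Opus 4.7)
The plan is to split into short and long time regimes. For $t\le 1$, Lemma~\ref{l:energy est u} together with Sobolev embedding already gives $\|u\|_{L^\infty}+\|u_x\|_{L^\infty}\lesssim\|u\|_{H^2}\lesssim \epsilon$, which improves the bootstrap trivially after absorbing into the $\langle t\rangle^{-1/2}$ weight. For $t\ge 1$ the strategy is to transfer the bootstrap to the asymptotic profile $\gamma$ using Lemma~\ref{diffLemma}, run the asymptotic ODE from Lemma~\ref{ODELemma} to control $\|\gamma\|_{L^\infty}$ and $\|v\gamma\|_{L^\infty}$, and then transfer back. From \eqref{spatialbnds} and \eqref{spatialu_xbnds}, using $\|Lu\|_{L^2},\|L(u_x)\|_{L^2}\lesssim\epsilon\langle t\rangle^{D^2\epsilon^2/2}$, one has
\begin{align*}
\|u(t)\|_{L^\infty} &\le t^{-1/2}\|\gamma(t)\|_{L^\infty} + C\,\epsilon\, t^{-3/4+D^2\epsilon^2/2},\\
\|u_x(t)\|_{L^\infty} &\le \tfrac12 t^{-1/2}\|v\gamma(t)\|_{L^\infty} + C\,\epsilon\, t^{-3/4+D^2\epsilon^2/2},
\end{align*}
so everything reduces to proving $\|\gamma\|_{L^\infty}\lesssim\epsilon$ and $\|v\gamma\|_{L^\infty}\lesssim\epsilon$ with universal implicit constants.

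The key structural observation is that the cubic term in \eqref{gamma asym} is purely phase rotational, since $\mathrm{Im}(\bar\gamma\cdot \tfrac{v}{2t}|\gamma|^2\gamma)=\tfrac{v}{2t}|\gamma|^4\in\mathbb R$ contributes nothing. Multiplying \eqref{gamma asym} by $\bar\gamma$ and taking imaginary parts yields $\partial_t|\gamma|^2 = 2\,\mathrm{Im}(\bar\gamma R)$, hence
\begin{equation*}
\partial_t|\gamma(t,v)| \le |R(t,v)|,\qquad \partial_t|v\gamma(t,v)| \le |vR(t,v)|.
\end{equation*}
Integrating in $t$ and taking the sup in $v$ gives
\begin{equation*}
\|\gamma(t)\|_{L^\infty} \le \|\gamma(1)\|_{L^\infty} + \int_1^t \|R(s)\|_{L^\infty}\,ds,
\end{equation*}
and the analogous inequality for $v\gamma$. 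The initial values at $t=1$ are controlled by \eqref{gammabnds2} (with $k=1$ and $k=2$), together with Lemmas~\ref{l:energy est u} and~\ref{LuBound}, giving $\|\gamma(1)\|_{L^\infty}+\|v\gamma(1)\|_{L^\infty}\lesssim\epsilon$.

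It remains to estimate the time integrals of $\|R\|_{L^\infty}$ and $\|vR\|_{L^\infty}$. I would substitute the bootstrap bounds $\|u\|_{L^\infty},\|u_x\|_{L^\infty}\le D\epsilon\langle s\rangle^{-1/2}$, the energy bound $\|u\|_{H^s}\lesssim\epsilon$, and the $Lu$ bound $\|Lu\|_{L^2}\lesssim\epsilon\langle s\rangle^{D^2\epsilon^2/2}$ into \eqref{Rbound0}–\eqref{Rbound1}. Every term produces a factor $\epsilon^a$ with $a\ge 3$ multiplied by a power $s^{-\alpha+O(D^2\epsilon^2)}$ with $\alpha>1$ — for instance the key terms $\|u\|_{L^\infty}^3\lesssim D^3\epsilon^3 s^{-3/2}$ and $\|u\|_{L^\infty}^2\|u_x\|_{L^\infty}\lesssim D^3\epsilon^3 s^{-3/2}$ — and thus, taking $\epsilon$ small enough that $D^2\epsilon^2$ is much less than the slack in each exponent, all of them integrate to give a bound of the form $C D^3\epsilon^3\le C'\epsilon^2\cdot\epsilon$. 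Consequently
\begin{equation*}
\|\gamma(t)\|_{L^\infty}+\|v\gamma(t)\|_{L^\infty}\le C_*\epsilon + C' D^3\epsilon^3 \le (C_* + 1)\epsilon,
\end{equation*}
where $C_*,C'$ are universal. Feeding this back into the transfer inequalities above gives $\|u(t)\|_{L^\infty}\le E\epsilon\langle t\rangle^{-1/2}$ and $\|u_x(t)\|_{L^\infty}\le F\epsilon\langle t\rangle^{-1/2}$ with $E,F$ bounded by fixed universal constants (the $\tfrac12$ in front of $\|v\gamma\|_{L^\infty}$ is what makes the $u_x$ bound actually improve). Choosing $D$ larger than $2(C_*+1)$ and then $\epsilon$ sufficiently small gives $E,F<D$ as required.

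The main obstacle, and the reason the statement of Lemma~\ref{ODELemma} looks so elaborate, is verifying that every single term in the remainders \eqref{Rbound0}–\eqref{Rbound1} is genuinely time-integrable after the bootstrap substitution, in spite of the small polynomial growth $\langle t\rangle^{D^2\epsilon^2/2}$ coming from $\|Lu\|_{L^2}$. This forces the careful tracking of exponents (e.g.\ the $t^{-3/20}$ factor against $\|u\|_{H^5}$, and the $t^{-13/12}$ factor against $\|u\|_{H^3}$), and it is exactly where the hypothesis $s\ge 5$ on the initial data and the smallness $D^2\epsilon^2\ll 1$ enter to guarantee a net exponent strictly less than $-1$ in every term.
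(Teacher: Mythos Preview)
Your proposal is correct and follows the same route as the paper's proof in Section~\ref{completeproof}: transfer to $\gamma$ via the difference bounds, use that the cubic term in \eqref{gamma asym} is a pure phase rotation so that $\partial_t|\gamma|\le|R|$, integrate \eqref{Rbound0}--\eqref{Rbound1} under the bootstrap, and transfer back via \eqref{spatialbnds}, \eqref{spatialu_xbnds}. One minor bookkeeping slip: not every term in $R$ carries a cubic factor $\epsilon^3$ --- the terms $t^{-5/4}\|Lu\|_{L^2}$ in \eqref{Rbound0} and $t^{-13/12}(\|Lu\|_{L^2}+\|u\|_{H^3})$ in \eqref{Rbound1} are only linear in $\epsilon$ after substitution --- but since these integrate to $C\epsilon$ with a universal constant carrying no factor of $D$, your conclusion $\|\gamma\|_{L^\infty}+\|v\gamma\|_{L^\infty}\le (C_*+1)\epsilon$ survives with a larger universal $C_*$, and the closing choice of $D$ is unaffected.
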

This closes the bootstrap argument, completing the proof of Theorem \ref{t:main}.

\section{The asymptotic profile and the asymptotic equation}
\label{s:asym profile}
This section contains  our implementation of the wave packet testing method.  In particular, we construct the wave packets, define the asymptotic profile $\gamma$, prove the approximation  Lemma~\ref{gammalemma} and establish the asymptotic equation  in Lemma~\ref{l:approx}. 

\subsection{Finding the wave packets\label{wavepax}}

Recall that the dispersion relation for \eqref{dnls} is $a(\xi)=-\xi^2$, with group velocity $v=-2\xi.$  We will consider, for each $v$, wave packets traveling along the ray $x=vt$, which corresponds to the frequency $\xi_v = - v/2$.   
We consider wave packets of the form
\begin{align}
\Phi_v(t,x)= e^{i\phi (t,x)}\chi \left(\frac{x-vt}{\sqrt{t}}\right),
   \label{O(1/t)eqn}
\end{align}
where $\chi$ is a Schwartz function with 
\[
\int \chi(y)\, dy=1,
\]
and the phase $\phi(t,x)=\frac{x^2}{4t}$ is 
the same as the phase of the fundamental solution for the linear Schr\"odinger flow. 
In particular this guarantees that 
\begin{align*}
   ( i\pt+\px^2)\Phi_v=O\left(\frac{1}{t}\right)
\end{align*}
for $t>0$ sufficiently large.
We can see by a direct computation that  we get 
\begin{align}
 ( i\pt +  \px^2)\Phi_v
 =&\frac{e^{i\phi}}{2t}\px \left[i(x-vt)\chi\left(\frac{x-vt}{\sqrt{t}}\right)+2t^{\frac{1}{2}}\chi'\left(\frac{x-vt}{\sqrt{t}}\right)\right].
 \label{LinPhi}
\end{align}
We note that the $\sqrt{t}$ spatial scale for the wave packet is chosen because it is exactly the localization needed in order for the wave packets to stay coherent on dyadic time scales. For more information on how to find this coherence time, see \cite{cite:WavePax}.

We will think of $\Phi_v$ as a good approximation of the solution to the linear Schr\"odinger equation.  We then define the asymptotic profile as follows:
\begin{align*}
    \gamma(t,v) :=\int u(t,x) \bar{\Phi}_v(t,x) \, dx.
\end{align*}

\subsection{Preliminary facts about $\gamma$}
Before we begin the proof of  Lemma ~\ref{diffLemma}, let us make some useful observations  about $\gamma$. First,
we can write $\gamma$ as a convolution.
Define $w:=e^{-i\phi}u(t,x)$. Then we have
\begin{align*}
    \gamma =&\int u \widebar{\Phi_v}\, dx=\int u e^{-i\phi (t)}\overline{\chi \left(\frac{x-vt}{\sqrt{t}}\right)} \, dx,
\end{align*}
which leads to
\begin{align}
   t^{-\frac{1}{2}}\gamma=w(t,vt)*t^{\frac{1}{2}}\chi(vt^{\frac{1}{2}}). 
   \label{gammaconvolution}
\end{align}

This will be helpful for the spatial difference bounds.  For the Fourier bounds, we will need the following, using Plancherel's identity:
\begin{align*}
    \gamma(t,v)=\int \hat{u}(t,\xi) \bar{\hat{\Phi}}_v(t,\xi)\, d\xi.
\end{align*}
In order to write the right hand side as a convolution, we will need to consider $\hat{\Phi}_v$ evaluated at $\frac{\xi}{2}$.
\begin{align*}
\hat{\Phi}_v\left(t,\frac{\xi}{2}\right)=&\frac{1}{\sqrt{2\pi}}\int e^{\frac{-ix\xi}{2}}e^{\frac{ix^2}{4t}}\chi\left(\frac{x-vt}{\sqrt{t}}\right)\, dx\\
     =&t^{\frac{1}{2}}\frac{1}{\sqrt{2\pi}}e^{\frac{-it\xi^2}{4}}e^{\frac{it(\xi-v)^2}{4}}\int e^{\frac{-it^{\frac{1}{2}}u(\xi-v)}{2}}e^{\frac{iu^2}{4}}\chi\left(u\right)\, du.
\end{align*}
Then define $\chi_1(\xi)=e^{i\xi^2}\widehat{[e^{iy^2/2}\chi(y)]}(\xi)$, where by Plancherel's identity we have the property that
\begin{align*}
    \int \chi_1(\xi)\, d\xi\approx \int \chi (y) \, dy=1.
\end{align*}
Then we can write 
\begin{align*}
\hat{\Phi}_v\left(t,\frac{\xi}{2}\right)=t^{\frac{1}{2}}e^{\frac{-it\xi^2}{4}}\chi_1\left(\frac{(\xi-v)t^{\frac{1}{2}}}{2}\right),
\end{align*}
which implies that
\begin{align*}
    \gamma(t,v)=\int e^{it\xi^2}\hat{u}(t,\xi) t^{\frac{1}{2}}\bar{\chi}_1\left(\frac{(2\xi-v)t^{\frac{1}{2}}}{2}\right) \, d\xi.
\end{align*}
By making a change of variables $2\xi\rightarrow \xi,$ we get 

\begin{align}
     \gamma(t,\xi)=&\frac{1}{2}e^{\frac{it\xi^2}{4}}\hat{u}\left(t,\frac{\xi}{2}\right)*_{\xi}t^{\frac{1}{2}}\chi_1\left(\frac{t^{\frac{1}{2}}\xi}{2}\right).
     \label{gammafourierconv}
\end{align}

\subsection{Difference bounds\label{difference}}
Next, we will use these convolutions to prove Lemma \ref{diffLemma}.

\begin{proof}[Proof of Lemma \ref{diffLemma}]

This proof is split up into three parts: The $\gamma$ bounds (equations \eqref{gammabnds} and \eqref{v gamma L^infty}), the spatial difference bounds (\eqref{spatialbnds}, \eqref{spatialL^2}, and \eqref{spatialu_xbnds}), and the Fourier difference bounds (\eqref{fourierbnds1} and \eqref{fourierbnds}). 

\medskip
\textbf{(i) Bounds for $\gamma$:} We can use Young's convolution inequality applied to \eqref{gammaconvolution} to obtain uniform pointwise and $L^2$ bounds:
\begin{align*}
    \|\gamma(t,v)\|_{L^\infty} \leq& t^{\frac{1}{2}}\|w(t,vt)\|_{L^\infty}\|t^{\frac{1}{2}}\chi(vt^{\frac{1}{2}})\|_{L^1}\lesssim t^{\frac{1}{2}}\|u\|_{L^\infty}\\
    \|\gamma(t,v)\|_{L^2_v}\lesssim& t^{\frac{1}{2}}\|w(t,vt)\|_{L^2_v}=\|u\|_{L^2_x}.
\end{align*}
Here we used the fact that the $L^2_x$ and the $L^2_v$
 norms are related as follows:
 \begin{equation*}
     \|f\|_{L^2_x}=t^{\frac{1}{2}}\|f\|_{L^2_v}.
 \end{equation*}
For the third inequality in \eqref{gammabnds}, we use the fact that the derivative of the convolution is
\begin{align*}
    \partial_v t^{-\frac{1}{2}}\gamma=\partial_v w(t,vt)*t^{\frac{1}{2}}\chi(vt^{\frac{1}{2}}).
\end{align*}
Also, by chain rule
\begin{align*}
    \frac{\partial w}{\partial v}= \frac{\partial w}{\partial x} \frac{\partial x}{\partial v}=t\px (e^{-i\phi}u),
\end{align*}
and so we get
\begin{align*}
    t\px (e^{-i\phi}u)=te^{-i\phi}[-i\phi_xu+u_x]=-\frac{i}{2}e^{-i\phi}[xu+2tiu_x]=\frac{-i}{2}e^{-i\phi}Lu.
\end{align*}
Then by Young's inequality, we have
\begin{align}
    \|\partial_v \gamma\|_{L^2_v}\lesssim t^{\frac{1}{2}}\|\partial_v w(t,vt)\|_{L^2_v}=\|Lu\|_{L^2_x}.
    \label{partialgamma}
\end{align}
This finishes the proof of \eqref{gammabnds}. Now we will prove \eqref{v gamma L^infty}. For the first part,
\begin{align*}
    v\gamma =&\int vu e^{-i\phi(t)} \overline{\chi\lp \frac{x-vt}{\sqrt{t}}\rp}\, dx\\
    =&\frac{1}{t}\int (Lu -2ti u_x)e^{-i\phi(t)} \overline{\chi\lp \frac{x-vt}{\sqrt{t}}\rp}\, dx\\
    \|v\gamma\|_{L^\infty}\leq& \frac{1}{t}\|Lu\|_{L^2_x}\left\|\chi\lp \frac{x-vt}{\sqrt{t}}\rp\right\|_{L^2_x}+\|u_x\|_{L^\infty} \int \chi\lp \frac{x-vt}{\sqrt{t}}\rp \, dx
\end{align*}

\begin{align*}
    \left\|\chi\lp \frac{x-vt}{\sqrt{t}}\rp\right\|_{L^2_x}^2 = \int t^{\frac{1}{2}} \left|\chi\lp y\rp\right|^2 \, dy \approx t^{\frac{1}{2}}, \,\,
    \int \chi\lp \frac{x-vt}{\sqrt{t}}\rp \, dx=\int t^{\frac{1}{2}} \chi\lp y\rp \, dy \approx t^{\frac{1}{2}}\\
\end{align*}

So we get 
\begin{align*}
    \|v\gamma\|_{L^\infty}\lesssim t^{-\frac{3}{4}}\|Lu\|_{L^2_x}+ t^{\frac{1}{2}}\|u_x\|_{L^\infty} 
\end{align*}

For the second part, we consider
\begin{align*}
    \frac{d}{dv}|\langle v \rangle^{\frac{k}{2}}\gamma|^2=2\Rea\left(\langle v \rangle^{k}\gamma\gamma_v\right) + k v \, \langle v \rangle^{k-2}|\gamma|^2.
\end{align*}
Then,
\begin{align*}
 \|\langle v \rangle^{\frac{k}{2}}\gamma\|_{L^\infty}^2  \lesssim \int \frac{d}{dv}|\langle v \rangle^{\frac{k}{2}}\gamma|^2\lesssim \|\partial_v \gamma\|_{L^2_v} \|\langle v \rangle^k\gamma\|_{L^2_v} +\|\langle v \rangle^k\gamma\|_{L^2_v}^2 \lesssim \|Lu\|_{L^2_x}\|u\|_{H^k_x} + \|u\|_{H^k_x}^2.
\end{align*}
Then using Young's product inequality we get 
\begin{align*}
 \|\langle v \rangle^{\frac{k}{2}}\gamma\|_{L^\infty}\lesssim&\|Lu\|_{L^2_x}+\|u\|_{H^k_x}
\end{align*}
as needed.

\medskip

\textbf{(ii) Spatial difference bounds:}
Consider
\begin{align*}
     | u(t,vt)-t^{-\frac{1}{2}}e^{i\phi(t,vt)}\gamma(t,v)|=&|w(t,vt)-w(t,vt)*t^{\frac{1}{2}}\chi(yt^{\frac{1}{2}})|\\
    =&\left|\int [w(t,(v-y)t)-w(t,vt)]t^{\frac{1}{2}}\chi(t^{\frac{1}{2}}y)\, dy\right|\\
    \leq& \int \left|w(t,(v-y)t)-w(t,vt)\right|t^{\frac{1}{2}}\left|\chi(t^{\frac{1}{2}}y)\right|\, dy.
\end{align*}
By the Fundamental Theorem of Calculus, 
\begin{align}\label{FTC spatial diff}
    |w(t,vt)-w(t,(v-y)t)|
    =&\int_{v-y}^v\partial_v w(t,tu) \, du\leq |y|^{\frac{1}{2}}\|\partial_v w\|_{L^2}.
\end{align}
So putting it back together, we have
\begin{align*}
     | u(t,vt)-t^{-\frac{1}{2}}e^{i\phi(t,vt)}\gamma(t,v)|\lesssim \|\partial_v w\|_{L^2_v}\int |y|^{\frac{1}{2}}t^{\frac{1}{2}}\left|\chi (t^{\frac{1}{2}}y)\right|\, dy.
\end{align*}
By a change of variables and
equation \eqref{partialgamma}, we have 
\begin{align*}
    | u(t,vt)-t^{-\frac{1}{2}}e^{i\phi(t,vt)}\gamma(t,v)|\lesssim t^{-\frac{3}{4}}\|Lu\|_{L^2_x}.
\end{align*}
This completes the proof of the $L^\infty$ bound \eqref{spatialbnds}. Now for the $L^2$ bound in \eqref{spatialL^2}, we consider the expression 
\begin{align*}
    \int \left|w(t,(v-y)t)-w(t,vt)\right|t^{\frac{1}{2}}\left|\chi(t^{\frac{1}{2}}y)\right|\, dy
\end{align*}
and use the Fundamental Theorem of Calculus to get
\begin{align*}
     \left|w(t,(v-y)t)-w(t,vt)\right|=\left|\int_0^1 y\partial_vw(t,t(v-hy))\, dh\right|,
\end{align*}
so that
 \begin{align*}
   \int \left|w(t,(v-y)t)-w(t,vt)\right|t^{\frac{1}{2}}\left|\chi(t^{\frac{1}{2}}y)\right|\, dy\lesssim& \int \int_0^1 |y|\left|\partial_vw(t,t(v-hy))\right|\, \left|\chi (t^{\frac{1}{2}}y) \right|t^{\frac{1}{2}}\, dh dy.
 \end{align*}
Then, taking the $L^2_v$ norm, we have
\begin{align*}
     \|u(t,vt)-t^{-\frac{1}{2}}e^{i\phi(t,vt)}\gamma(t,v)\|_{L^2_v}\lesssim&\|\partial_vw(t,tv)\|_{L^2_v}\int |y|\left|\chi (t^{\frac{1}{2}}y) \right|t^{\frac{1}{2}}\, dy\\
     \lesssim&t^{-\frac{1}{2}}\|\partial_vw(t,tv)\|_{L^2_v}\lesssim t^{-1}\|Lu\|_{L^2_x}.
\end{align*}
This finishes the proof of the spatial difference bounds \eqref{spatialL^2} for $u$. It remains to prove  the bound in \eqref{spatialu_xbnds}.
For this we consider the following computation 
\begin{align*}
    v\gamma=&\int uve^{-i\phi(t,x)}\widebar{\chi}\left(\frac{x-vt}{\sqrt{t}}\right)\, dx\\
    =&\int \left[\frac{Lu}{t}-2iu_x\right]e^{-i\phi(t,x)}\widebar{\chi}\left(\frac{x-vt}{\sqrt{t}}\right)\, dx\\
   t^{-\frac{1}{2}}v\gamma =&\underbrace{\int \frac{Lu}{t} e^{-i\phi(t,x)}t^{-\frac{1}{2}}\widebar{\chi}\left(\frac{x-vt}{\sqrt{t}}\right)\, dx}_{I}+\int \frac{2}{i}t^{-\frac{1}{2}}u_xe^{-i\phi(t,x)}\widebar{\chi}\left(\frac{x-vt}{\sqrt{t}}\right)\, dx.
\end{align*}
We define $j:=e^{-i\phi}u_x$. Then,
\begin{align*}
    t^{-\frac{1}{2}}v\gamma=\frac{2}{i}j(t,vt)*t^{\frac{1}{2}}\widebar{\chi}(t^{\frac{1}{2}}v)+I,
\end{align*}
where 
\begin{align*}
    |I|\leq \frac{1}{t}\|Lu\|_{L^2}\left\|t^{-\frac{1}{2}}\widebar{\chi}\left(\frac{x-vt}{\sqrt{t}}\right)\right\|_{L^2}\lesssim t^{-\frac{3}{4}}\|Lu\|_{L^2}.
\end{align*}
Then, 
\begin{align*}
   \left|e^{-i\phi}u_x-\frac{i}{2}t^{-\frac{1}{2}}v\gamma\right|\lesssim &|j(t,vt)-j(t,vt)*t^{\frac{1}{2}}\widebar{\chi}(t^{\frac{1}{2}}v)|+t^{-\frac{3}{4}}\|Lu\|_{L^2_x}.
\end{align*}
For the first term, we can argue as in the proof of \eqref{spatialbnds} to obtain
\begin{align*}
    |j(t,vt)-j(t,vt)*t^{\frac{1}{2}}\widebar{\chi}(t^{\frac{1}{2}}v)|\lesssim t^{-\frac{1}{4}}\|\partial_vj\|_{L^2_v}.
\end{align*}
Now we have
\begin{align*}
    \frac{dj}{dv}=t\px(j)=te^{-i\phi}[-i\phi_x u_x+u_{xx}]=-\frac{i}{2}e^{-i\phi}L(u_x),
\end{align*}
and so 
\begin{align*}
    \|\partial_vj\|_{L^2_v}=t^{-\frac{1}{2}}\|L(u_x)\|_{L^2_x},
\end{align*}
completing the proof of \eqref{spatialu_xbnds}.

\medskip

\textbf{(iii) Fourier difference bounds:} First we will prove some initial bounds for 
$\gamma(t,\xi)$ which are similar to \eqref{gammabnds}, using \eqref{gammafourierconv}.  For convenience let $h(t,\xi):=t^{\frac{1}{2}}\chi_1\left(\frac{t^{\frac{1}{2}}}{2}\xi\right)$. 
Using Young's inequality we can get the following bounds:
\begin{align*}
    \|\gamma(t,\xi)\|_{L^\infty}\lesssim& \left\|e^{\frac{it\xi^2}{4}}\hat{u}\right\|_{L^\infty}\left\|h\right\|_{L^1}  \lesssim\|\hat{u}(t,\xi)\|_{L^\infty}\\
    \|\gamma(t,\xi)\|_{L^2_\xi}\lesssim& \left\|e^{\frac{it\xi^2}{4}}\hat{u}\right\|_{L^2_\xi}\left\|h\right\|_{L^1}   \lesssim \|\hat{u}(t,\xi)\|_{L^2_\xi}.
\end{align*}
Also, since
\begin{align*}
    \partial_{\xi}\gamma(t,2\xi)=&\partial_{\xi}\left(e^{it\xi^2}\hat{u}(t,\xi)\right)*h,
    \end{align*}
where
    \begin{align*}
\partial_{\xi}\left(e^{it\xi^2}\hat{u}(t,\xi)\right)
   =&e^{it\xi^2}[2ti\xi \hat{u}(t,\xi)+ \partial_\xi \hat{u}(t,\xi)]
   =-ie^{it\xi^2} \widehat{Lu}(\xi),
\end{align*}
we get
\begin{align*}
    \|\partial_{\xi}\gamma(t,\xi)\|_{L^2_\xi}\approx \|Lu\|_{L^2_x}.
\end{align*}
Now we can start the proof of the difference bounds \eqref{fourierbnds1}, \eqref{fourierbnds}.  Recall that 
\[
\gamma(t,2\xi)=e^{it\xi^2}\hat{u}(t,\xi)*_{\xi}t^{\frac{1}{2}}\chi_1\left(t^{\frac{1}{2}}\xi\right).
\]
First define $g(t,\xi):=e^{it\xi^2}\hat{u}(t,\xi)$, which is the Fourier version of $w$ from before. Then, 
\begin{equation}
    |\hat{u}(t,\xi)-e^{-it\xi^2}\gamma(t,2\xi)|=|g(t,\xi)-\gamma(t,2\xi)|
    =\left|\int t^{\frac{1}{2}}\chi_1(t^{\frac{1}{2}}y) [g(t,\xi)-g(t,\xi-y)]\, dy\right|. \label{fourierdiff}
\end{equation}
Now, using H\"older's inequality, we can write
\begin{align*}
    \left|g(t,\xi)-g(t,\xi-y)\right|=\left|\int_{\xi-y}^\xi \partial_\xi g(t,\xi)\, d\xi\right|
    \lesssim |y|^{\frac{1}{2}}\|Lu\|_{L^2_x}.
\end{align*}
Now,
\begin{align*}
    \|\hat{u}(t,\xi)-e^{-it\xi^2}\gamma(t,2\xi)\|_{L^\infty}\lesssim& \left|\int t^{\frac{1}{2}}\chi_1(yt^{\frac{1}{2}})|y|^{\frac{1}{2}}\|Lu\|_{L^2_x}\right|\\
    \lesssim& t^{-\frac{1}{4}}\|Lu\|_{L^2_x}.
\end{align*}
For the $L^2$ bound, we will use \eqref{fourierdiff} and
\begin{align*}
     |\hat{u}(t,\xi)-e^{it\xi^2}\gamma(t,2\xi)|\leq \left|\int t^{\frac{1}{2}}\chi_1(t^{\frac{1}{2}}y) [g(t,\xi)-g(t,\xi-y)]\, dy\right|,
\end{align*}
where we can write
\begin{align*}
    g(t,\xi)-g(t,\xi-y)=\int_0^1\partial_\xi g(t,\xi-hy)|y|\, dh
\end{align*}
to get
\begin{align*}
    \|g(t,\xi)-\gamma(t,2\xi)\|_{L^2_\xi}\lesssim& \|\partial_\xi g(t,\xi)\|_{L^2_\xi}\int |y|t^{\frac{1}{2}}\chi_1(t^{\frac{1}{2}}y)\, dy\\
    \lesssim& t^{-\frac{1}{2}}\|\partial_\xi g(t,\xi)\|_{L^2_\xi}=t^{-\frac{1}{2}}\|Lu\|_{L^2_x}.
\end{align*}
This completes the proof of \eqref{fourierbnds1} and \eqref{fourierbnds}. 
\end{proof}

\subsection{The asymptotic equation}  \label{ODE}
The aim of this subsection is to prove the result of Lemma~ \ref{ODELemma} which asserts that the asymptotic profile $\gamma$ solves the asymptotic equation \eqref{gamma asym}.

\begin{proof}[Proof of Lemma \ref{ODELemma}]

By a direct computation, we get
\begin{align*}
    i\gamma_t=&\int iu_t\bar{\Phi}_v + iu\bar{\Phi}_{vt}\, dx\\
=&\int -u\overline{\left[i\pt {\Phi}_v+\px^2{\Phi}_v\right]}-i\px(|u|^2u)\bar{\Phi}_v\, dx.
\end{align*}
Now, using \eqref{LinPhi}, we get 
\begin{align*}
i\gamma_t
=&\int \frac{-i}{4t^2}e^{-i\phi}Lu \left[-i(x-vt)\chi(\cdot)+2t^{\frac{1}{2}}\chi'(\cdot)\right]+i(|u|^2u)\px\bar{\Phi}_v\, dx.
\end{align*}
Recall that 
\begin{align*}
    \Phi_v=e^{i\frac{x^2}{4t}}\chi \left(\frac{x-vt}{\sqrt{t}}\right),
\end{align*}
so then
\begin{align*}
    \px{\Phi}_v
    =&t^{-\frac{1}{2}}\Psi_v+\frac{ix}{2t}\Phi_v,
\end{align*}
where $\Psi_v:=e^{i\phi}\chi'(\cdot)$ is very similar to $\Phi_v.$  We can write
\begin{align*}
    i\gamma_t(t)= \, t^{-1}\frac{v}{2}(|\gamma|^2\gamma)-R(t,v)
\end{align*}
 where
\begin{align*}
    R(t,v)=:&\int \frac{i}{4t^2}e^{-i\phi}Lu \left[-i(x-vt)\chi(\cdot)+2t^{\frac{1}{2}}\chi'(\cdot)\right]\, dx-\int i(|u|^2u)\px\bar{\Phi}_v\, dx +t^{-1}\frac{v}{2}(|\gamma|^2\gamma).
\end{align*}
We can split the reminder term further  and estimate each piece
\begin{align*}
    R(t,v)=R_1+R_2+R_3+R_4
\end{align*}
with
\begin{align*}
    R_1=&\int \frac{i}{4t^2}e^{-i\phi}Lu \left[-i(x-vt)\chi(\cdot)+2t^{\frac{1}{2}}\chi'(\cdot)\right]\, dx\\
    R_2=&-\int i(|u|^2u)t^{-\frac{1}{2}}\bar{\Psi}_v\\
    R_3=&-\int \frac{v}{2}u\bar{\Phi}_v[|u|^2-|u(t,vt)|^2]\, dx\\
    R_4=&\frac{v}{2}\, \gamma [t^{-1}|\gamma|^2-|u(t,vt)|^2].
\end{align*}
We will first prove \eqref{Rbound0}. $R_1$ can be bounded as follows.
We can write it as a convolution: 
\begin{align*}
    R_1=\frac{1}{2t}\left[\left(it^{\frac{1}{2}}\chi'(t^{\frac{1}{2}}v)+tv\chi(t^{\frac{1}{2}}v)\right)*_v\partial_v w(t,vt)\right]
\end{align*}
Then we can get the bound using H\"older's inequality:
\begin{align}
    |R_1|\lesssim t^{-\frac{3}{4}}\|\partial_v w(t,vt)\|_{L^2_v}=t^{-\frac{5}{4}}\|Lu\|_{L^2_x}.
    \label{R1-est}
\end{align}
$R_2$ can be bounded as follows:
\begin{align*}
    |R_2|\lesssim \|u\|_{L^\infty}^3 t^{-\frac{1}{2}}\int |\bar{\Psi}_v|\, dx \lesssim \|u\|_{L^\infty}^3.
\end{align*}
Now we estimate $R_3$. 
We have
\begin{align*}
    |R_3|\lesssim&t^{-1}\|u\|_{L^\infty}\int xu\bar{\Phi}_v[u(t,x)-u(t,vt)]\, dx=t^{-1}\|u\|_{L^\infty}\int \lp Lu-2tiu_x\rp\bar{\Phi}_v[u(t,x)-u(t,vt)]\, dx.
\end{align*}
The first term can be bounded by 
\begin{align*}
    t^{-1}\|u\|_{L^\infty}^2 \int Lu \bar{\Phi}_v\leq  t^{-\frac{3}{4}}\|u\|_{L^\infty}^2 \|Lu\|_{L^2_x}.
\end{align*}
For the second term, by doing a change of variables $x=t(z+v)$, then proceeding as for the spatial difference bounds in \eqref{FTC spatial diff}, we get this term is bounded by
\begin{align*}
    \|u\|_{L^\infty} \|u_x\|_{L^\infty}t^{-\frac{1}{4}}\|Lu\|_{L^2_x}.
\end{align*}
For $R_4$, we can write 
\begin{align*}
     |t^{-1}|\gamma|^2-|u(t,vt)|^2|=&\left||u(t,vt)|^2-|t^{-\frac{1}{2}}\gamma||w(t,vt)*t^{\frac{1}{2}}\chi(vt^{\frac{1}{2}})|\right|\\
     \lesssim& \|u\|_{L^\infty }\left||u(t,vt)|-|w(t,vt)*t^{\frac{1}{2}}\chi(vt^{\frac{1}{2}})|\right|.
\end{align*}
Combining this with the difference bound \eqref{spatialbnds} and \eqref{v gamma L^infty}, we have 
\begin{align*}
    |R_4|\lesssim \|v\gamma\|_{L^\infty}\|u\|_{L^\infty}t^{-\frac{3}{4}}\|Lu\|_{L^2_x}\lesssim t^{-\frac{3}{4}}\|u\|_{L^\infty}\|Lu\|_{L^2_x}\lp t^{-\frac{3}{4}}\|Lu\|_{L^2_x}+ t^{\frac{1}{2}}\|u_x\|_{L^\infty}\rp.
\end{align*}
Overall this gives us the bound
\begin{align*}
    \|R\|_{L^\infty}\lesssim & \ t^{-\frac{5}{4}}\|Lu\|_{L^2_x}+\|u\|_{L^\infty}^3 +t^{-\frac{3}{4}}\|u\|_{L^\infty}^2 \|Lu\|_{L^2_x}+  \|u\|_{L^\infty} \|u_x\|_{L^\infty}t^{-\frac{1}{4}}\|Lu\|_{L^2_x}\\
    &+t^{-\frac{3}{4}}\|u\|_{L^\infty}\|Lu\|_{L^2_x}\lp t^{-\frac{3}{4}}\|Lu\|_{L^2_x}+ t^{\frac{1}{2}}\|u_x\|_{L^\infty}\rp.
\end{align*}
Next we will prove \eqref{Rbound1}. We will consider two cases, when $|v|<1$ and when $|v|\geq 1$.  First, for the case when $|v|<1,$ we already have a bound, using \eqref{R1-est}:
\begin{align*}
    |vR_1|\leq |R_1|\lesssim  t^{-\frac{5}{4}}\|Lu\|_{L^2_x}.
\end{align*}
For the case $|v|\geq 1,$ we  prove a bound with a $\langle v\rangle^{k} $ weight  and interpolate between it and \eqref{R1-est}.
We have
\begin{align}
    R_1=&\int u\frac{e^{-i\phi}}{2t}\px\left[i(x-vt)\chi(\cdot)+2t^{\frac{1}{2}}\chi’(\cdot)\right]\, dx .
    \label{R1bound}
\end{align}
Now define $I:=[-\sqrt{t}+vt, \sqrt{t}+vt]$. Notice that for $\chi$ supported on $[-1,1]$, $\chi\left(\frac{x-vt}{\sqrt{t}}\right)$ is supported on $I$.
Then assuming that $|v| \geq 1$ we will have $|x| \approx |v||t|$ in $I$, and so we can write

\begin{align}
    vR_1=&\int \lp \frac{x}{t}\rp u\frac{e^{-i\phi}}{2t}\px\left[i(x-vt)\chi(\cdot)+2t^{\frac{1}{2}}\chi’(\cdot)\right]\, dx \\
    =&\frac{1}{t}\int 
    \lp Lu +2ti u_x\rp\frac{e^{-i\phi}}{2t}\px\left[i(x-vt)\chi(\cdot)+2t^{\frac{1}{2}}\chi’(\cdot)\right]\, dx .
\end{align}
Since 
\begin{align*}
\px\left[i(x-vt)\chi(\cdot)+2t^{\frac{1}{2}}\chi’(\cdot)\right]=O(1)\,\, \text{ in } t,
\end{align*}
the first term can be bounded by

\begin{align*}
    t^{-2}\|Lu\|_{L^2(I)}\|e^{-i\phi}\|_{L^2(I)}\leq t^{-\frac{7}{4}}\|Lu\|_{L^2(I)}.
\end{align*}
For the second term, we subtract
\begin{align*}
    \frac{1}{t}u_x(t,vt)e^{-i\phi(t,vt)}\int \px\left[i(x-vt)\chi(\cdot)+2t^{\frac{1}{2}}\chi’(\cdot)\right]\, dx =0
\end{align*}
to get that the second term is equal to 
\begin{align*}
    \frac{i}{t}\int 
     \lp u_x  e^{-i\phi}-u_x(t,vt)e^{-i\phi(t,vt)} \rp \px\left[i(x-vt)\chi(\cdot)+2t^{\frac{1}{2}}\chi’(\cdot)\right]\, dx .
\end{align*}
Then we can proceed similarly to as in the proof of the spatial difference bounds. Define $w_2:= u_x  e^{-i\phi}$, and we get $\p_v w_2=-\frac{i}{2}e^{-i\phi} Lu_x$. Then, using the change of variables $x=t(z+v)$ and the Fundamental Theorem of Calculus, we get
\begin{align*}
    &\frac{i}{t}\int 
     \lp w_2(t, (v+z)t)-w_2(t,vt) \rp \p_z\left[izt\chi(zt^{\frac{1}{2}})+2t^{\frac{1}{2}}\chi’(zt^{\frac{1}{2}})\right]\, dz \\
     \lesssim&\frac{1}{t}\|\p_v w_2\|_{L^2_v} \int z^{\frac{1}{2}} \p_z\left[izt\chi(zt^{\frac{1}{2}})+2t^{\frac{1}{2}}\chi’(zt^{\frac{1}{2}})\right]\, dz \\
      \lesssim&t^{-\frac{3}{2}}\|Lu_x\|_{L^2_x} \int \lp \frac{x-vt}{t} \rp^{\frac{1}{2}} \p_x\left[i(x-vt)\chi(\cdot)+2t^{\frac{1}{2}}\chi’(\cdot)\right]\, dx\\
         \lesssim&t^{-\frac{3}{2}}\|Lu_x\|_{L^2_x} \int_I \lp \frac{x-vt}{t} \rp^{\frac{1}{2}} \, dx=t^{-\frac{5}{4}}\|Lu_x\|_{L^2_x}.
\end{align*}
Overall, this gives
\begin{align*}
    \|vR_1\|_{L^\infty}\lesssim t^{-\frac{5}{4}}\lp\|Lu\|_{L^2_x} +\|Lu_x\|_{L^2_x}  \rp.
\end{align*}
Next, we have
\begin{align*}
    |vR_2|=\left|\int ivu|u|^2t^{-\frac{1}{2}}\bar{\Psi}_v\right|=\left|\int i\frac{x}{t} u(|u|^2)t^{-\frac{1}{2}}\bar{\Psi}_v\right|=\left|\int \left(\frac{iLu}{t}+2u_x\right) |u|^2t^{-\frac{1}{2}}\bar{\Psi}_v\right|.
\end{align*}
Then the first term is
\begin{align*}
    \left|\int t^{-\frac{3}{2}}Lu|u|^2\widebar{\Psi}_v\, dx\right|\lesssim t^{-\frac{3}{2}}\|u\|_{L^\infty}^2\|Lu\|_{L^2}\|\Psi_v\|_{L^2},
\end{align*}
where $\|\Psi_v\|_{L^2}\approx t^\frac{1}{4}$. The second term is 
\begin{align*}
    \left|\int u_x|u|^2t^{-\frac{1}{2}}\widebar{\Psi}_v\, dx\right|\lesssim \|u_x\|_{L^\infty}\|u\|_{L^\infty}^2\int t^{-\frac{1}{2}}\widebar{\Psi}_v\, dx\approx \|u_x\|_{L^\infty}\|u\|_{L^\infty}^2.
\end{align*}
For $vR_3,$ by multiplying the previous $R_3$ bound by $v$, we get 
\begin{align*}
    |vR_3|\lesssim&t^{-2}\|u\|_{L^\infty}\int xu\bar{\Phi}_v[xu(t,x)-vtu(t,vt)]\, dx\\
    =&t^{-2}\|u\|_{L^\infty}\int \lp Lu-2tiu_x\rp e^{-i\phi} \chi\lp\frac{x-vt}{\sqrt{t}} \rp[Lu(t,x)-Lu(t,vt)-2tiu_x(t,x)+2tiu_x(t,vt)]\, dx\\
    \lesssim& \|u\|_{L^\infty}\left[ t^{-2}\|Lu\|_{L^2}^2\int \chi(\cdot) +t^{-1}\|u_x\|_{L^\infty}\|Lu\|_{L^2_x}\|\chi(\cdot)\|_{L^2_x}+ \|u_x\|_{L^\infty}\|\p_v w_2\|_{L^2_v}\int tz^{\frac{1}{2}}\chi(t^{\frac{1}{2}}z)\, dz \right]\\
     \lesssim& \|u\|_{L^\infty}\left[ t^{-\frac{3}{2}}\|Lu\|_{L^2}^2+t^{-\frac{3}{4}}\|u_x\|_{L^\infty}\|Lu\|_{L^2_x}+ \|u_x\|_{L^\infty}t^{-\frac{1}{2}}\|Lu_x\|_{L^2_x}t^{\frac{1}{4}}\right],
\end{align*}
where we used that $\|\chi(\cdot)\|_{L^1_x}\sim t^{\frac{1}{2}}, \|\chi(\cdot)\|_{L^2_x}\sim t^{\frac{1}{4}}$, and for the last term, we used the same method as in the bound for $vR_1.$

Similarly for $R_4$ we can get
\begin{align*}
|vR_4|\lesssim&\|v\gamma\|_{L^\infty}\|u\|_{L^\infty}(t^{-\frac{1}{2}}v \gamma -v u(t,vt)).
\end{align*}
Notice that $vw(t,vt)=\frac{1}{t}e^{-i\phi}Lu(t,vt)-2i w_2(t,vt)$, so then

\begin{align*}
    t^{-\frac{1}{2}}v \gamma -v u(t,vt)=&v w(t,vt)* t^{\frac{1}{2}}\chi(v t^{\frac{1}{2}})-vu(t,vt)\\
    =&\int \frac{1}{t}\lp e^{-i\phi((v-y)t}Lu(t,(v-y)t)-e^{-i\phi(vt)}Lu(t,vt) \rp t^{\frac{1}{2}} \chi(t^{\frac{1}{2}} y) \, dy\\
    -&2i \int \lp w_2(t,(v-y)t)- w_2(t,vt)\rp t^{\frac{1}{2}} \chi(t^{\frac{1}{2}} y) \, dy\\
    \lesssim& \lp t^{-1}\|\p_v Lu\|_{L^2_v}+ \|\p_v w_2\|_{L^2_v} \rp \int |y|^{\frac{1}{2}}t^{\frac{1}{2}}\chi(t^{\frac{1}{2}} y) \, dy \\
    \lesssim& t^{-\frac{5}{4}}\|(Lu)_x\|_{L^2_x}+t^{-\frac{3}{4}}\|Lu_x\|_{L^2_x}.
\end{align*}

This completes the proof of \eqref{Rbound1}.
\end{proof}

\section{The bootstrap argument and the conclusion of the proof \label{bootstrapsection}}
In this section  we use a bootstrap argument in order to complete the proof of Theorem~\ref{t:main}. Recall that we make two bootstrap assumptions, on $u$ and its derivative as follows, see \eqref{bootstrap}:
\begin{align*}
    \|u\|_{L^\infty}\leq D\epsilon \langle t\rangle^{-\frac{1}{2}},\ \qquad
      \|u_x\|_{L^\infty}\leq D\epsilon \langle t\rangle^{-\frac{1}{2}},
\end{align*}
where $D$ is a sufficiently large universal constant to be chosen later. These bootstrap assumptions are assumed to hold for a solution $u$ to the DNLS equation in a time interval $[0,T]$ with $T$ arbitrary large. Then our objective will be to improve the constant $D$ in these bounds under the assumption that $\epsilon $ is sufficiently small. Once this is achieved, a standard continuity argument shows that the solution $u$ is global in time and satisfies these bounds, thereby concluding the proof of the theorem.

\subsection{Energy bounds for $u$ \label{boundsU}}  The energy bounds for $u$ are given in Lemma~\ref{l:energy est u} These are relatively standard and follow from the well-posedness, since the data-to-solution map is continuous.  
\begin{proof}[Proof of Lemma~\ref{l:energy est u}]
 For $k\geq \frac{1}{2}$, we have Lipschitz continuity of the data-to-solution map \cite{cite:Taks12}, so in particular, we have
    \begin{align*}
       \|u\|_{H^k}\leq C\|u_0\|_{H^k} \lesssim \epsilon.
    \end{align*}
The case when $0\leq k\le \frac{1}{2}$ also follows from the global well-posedness of \eqref{dnls} as seen in \cite{cite:1} and the references within.

\end{proof}

\begin{rmk}
The conservation laws for \eqref{dnls} are a consequence of the complete integrability. However having these conservation laws, strictly speaking, are not necessary for our arguments. A simple, robust, and more direct energy estimate using Gr\"onwall's inequality and the above bootstrap bounds would give an estimate of the following form, which will suffice for the arguments below
\[
\Vert u(t)\Vert_{H^k} \lesssim \epsilon \langle t\rangle^{\frac{D^2\epsilon^2}{2}}.
\]
\end{rmk}

\subsection{Energy bounds for $Lu$} \label{boundsLU}
In this subsection, we prove energy bounds on $Lu$ and $L(u_x)$ given in Lemma~\ref{LuBound}.
We can verify the following identities. 
\begin{align*}
    [L,\px]=1,\,[i\pt+\px^2,L]=0, \,L(|u|^2u)=2|u|^2 Lu-u^2\widebar{Lu}.
    \end{align*}
Therefore, by applying $L$ to \eqref{dnls} and using the identities, we get
\begin{align*}
   L( iu_t+u_{xx})=&(i\pt+\px^2)Lu=-iL(\partial_x(|u|^2u))
   =-i[\px(2|u|^2Lu-u^2\widebar{Lu})+|u|^2u],
\end{align*}
which gives us an equation for $Lu$.
This is similar to the linearization of \eqref{dnls} so we expect it to have well-posedness and good bounds on the solution. For convenience, we will rewrite this in terms of the variable $z:=Lu$ as
\begin{align*}
    (i\pt+\px^2)z=-i[\px(2|u|^2z-u^2\widebar{z})+|u|^2u].
\end{align*}
By doing energy estimates in the usual way, we get
\begin{align*}
    \frac{d}{dt}\|z\|_{L^2}^2
    =&2\int |u|^2 \px (|z|^2) \, dx +\Rea \int u^2\px (\bar{z}^2)\, dx-2\Rea \int |u|^2 u\bar{z}\\
    =&-2\int \px (|u|^2)  (|z|^2) \, dx -\Rea \int \px( u^2) \bar{z}^2\, dx-2\Rea \int |u|^2 u\bar{z}\, dx.
\end{align*}
Then,
\begin{align*}
    \left|\frac{d}{dt}\|z\|_{L^2}^2\right|\lesssim& \|u\|_{L^\infty}\|u_x\|_{L^\infty}\|z\|_{L^2}^2+\|u\|_{L^\infty}^2\|u\|_{L^2}\|z\|_{L^2}\\
\leq&(D\epsilon  \langle t\rangle^{-\frac{1}{2}})^2 \|z\|_{L^2}^2+(D\epsilon  \langle t\rangle^{-\frac{1}{2}})^2\epsilon \|z\|_{L^2}.
\end{align*}
Then by letting $y:=\|z\|_{L^2}^2$, we can solve the following Bernoulli equation:
\begin{align*}
    y'+p(t)y\leq B(t)y^{\frac{1}{2}},
\end{align*}
where $p(t)=-D^2\epsilon^2\langle t\rangle^{-1}, B(t)=D^2\epsilon^3 \langle t\rangle^{-1}$.
By making the substitution $w=y^{\frac{1}{2}}$, we get
\begin{align*}
    w'+\frac{p(t)}{2}w\leq \frac{B(t)}{2},
\end{align*}
 which is a linear equation, so solving this and using the initial condition, i.e. the localization $\|Lu(0)\|_{L^2}=\|xu_0\|_{L^2}\leq \epsilon$, we get
\begin{align*}
     \|z\|_{L^2}\lesssim \epsilon \,\langle t\rangle^{\frac{D^2\epsilon^2}{2}}.
\end{align*}
In terms of $Lu$, this is
\begin{align*}
    \|Lu\|_{L^2_x}\lesssim \,\epsilon\,\langle t\rangle ^{\frac{D^2\epsilon^2}{2}}.
\end{align*}
We can prove the bound for $\|L(u_x)\|_{L^2_x}$ similarly since $z_2:=L(u_x)$
satisfies the following equation:
\begin{align*}
    (i\pt+\px^2)z_2
    =&-i\px\left[2|u|^2z_2+2\px(|u|^2)Lu-2uu_x\widebar{Lu}-u^2z_2\right]-i(2u_x|u|^2+u^2\bu_x).
\end{align*}
Then, by similar computations of the energy estimate, we end up with the same bound:
\begin{align*}
        \|L(u_x)\|_{L^2_x}\lesssim \epsilon \, \langle t\rangle^{\frac{D^2\epsilon^2}{2}}.
\end{align*}
Note that since $Lu_x=\px(Lu)+u, $ we get the same bound for $\px(Lu)$  in $L^2_x.$

\subsection{Completing the proof} \label{completeproof}
Now we will prove Lemma~\ref{finishingprooflemma}, i.e. the bounds with better constants, in order to close the bootstrap argument and thus complete the proof of the main theorem, Theorem~\ref{t:main}. First, we will close the bootstrap on $u$.
Using Lemma \ref{diffLemma} and Lemma \ref{LuBound}, we get
\begin{align}
\label{dif}
    \|e^{-i\phi}u-t^{-\frac{1}{2}}\gamma\|_{L^\infty}\lesssim t^{-\frac{3}{4}}\|Lu\|_{L^2_x}\lesssim \epsilon\, \langle t\rangle ^{\frac{D^2\epsilon^2}{2}-\frac{3}{4}}, \qquad t\gtrsim 1.
\end{align}
Using the second part of \eqref{v gamma L^infty} with $k=0$ and the energy estimates for $u$ and $Lu$, we have
\begin{align*}
    \|\gamma(1,v)\|_{L^\infty}\lesssim \epsilon.
\end{align*}
By applying the bootstrap assumptions \eqref{bootstrap} to \eqref{Rbound0}, we get
\begin{align*}
    \|R\|_{L^\infty}\lesssim t^{-\frac{5}{4}}\epsilon\, \langle t\rangle^{\frac{D^2\epsilon^2}{2}}+D^3\epsilon^3\,\langle t\rangle^{-\frac{3}{2}}+2D^2\epsilon^2\,\langle t\rangle^{-\frac{5}{4}}\epsilon\, \langle t\rangle^{\frac{D^2\epsilon^2}{2}}+2D\epsilon\,  \langle t\rangle^{-2}\epsilon^2\, \langle t\rangle^{D^2\epsilon^2}.
\end{align*}
By doing an energy estimate on our asymptotic equation
\begin{align*}
    i\gamma_t(t,v)= \, t^{-1}\frac{v}{2}(|\gamma (t,v)|^2\gamma(t,v))-R(t,v),
\end{align*} 
we get
\begin{align*}
   \int_1^t \frac{d}{ds}|\gamma(s,v)|^2\, ds=& 2\Rea\int_1^t iR(s,v)\bar{\gamma}(s,v)\, ds,
\end{align*}
which leads to
\begin{equation}
\label{gammaeqn}
    \left|   \gamma(t,v)\right|\lesssim |\gamma(1,v)|+ \int_1^t|R(s,v)|\, ds. 
\end{equation}
The integral can be bounded by
\begin{align*}
    \int_1^t| R(s)|\, ds\lesssim \epsilon+ D^3\epsilon^3+2D^2\epsilon^3+2D\epsilon^3,
\end{align*}
where we need $D^2\epsilon^2< \frac{1}{4}$ so that the terms are all integrable, i.e., $D\epsilon<\frac{1}{2}$.
Then, we have
\begin{align*}
   | \gamma(t,v)|\lesssim \, & \epsilon +\epsilon+ D^3\epsilon^3+2D^2\epsilon^3+2D\epsilon^3\lesssim \epsilon,
\end{align*}
for small enough $\epsilon$. 

Combining this bound with \eqref{dif},  by the triangle inequality we obtain
\begin{align*}
    |u|\leq& \, t^{-\frac{1}{2}}|\gamma|+|e^{-i\phi}u-t^{-\frac{1}{2}}\gamma|\\
\lesssim&\,  \epsilon t^{-\frac{1}{2}}[1+D+\epsilon].
\end{align*}
Then we need 
\begin{align*}
    1+\epsilon \ll D \ll \frac{1}{\epsilon},
\end{align*}
which is possible if we choose $\epsilon$ small enough. This closes the first part of the bootstrap argument. 
To close the $u_x$ bootstrap bound, we will do the same argument as for $u.$
By triangle inequality, we can write
\begin{align}
    |u_x|\lesssim t^{-\frac{1}{2}}|v\gamma|+|u_x(t,vt)-t^{-\frac{1}{2}}e^{i\phi(t,vt)}v\gamma(t,v)|.
    \label{u_xtriangle}
\end{align}
For the first term, we have from \eqref{gammaeqn} that 
\begin{equation*}
    |v\gamma(t,v)|\lesssim |v\gamma(1,v)|+ \int_1^t|vR(s,v)|\, ds. 
\end{equation*}
From  \eqref{v gamma L^infty}, we have 
\begin{align*}
\|v\gamma(0,v)\|_{L^\infty}\lesssim& \left(\|Lu(1)\|_{L^2}+D\epsilon\right)\lesssim (1+D)\epsilon.
\end{align*}

From \eqref{Rbound1} and using the bootstrap assumption \eqref{bootstrap}, we have
\begin{align*}
  |vR|
    \lesssim& \epsilon \langle t\rangle^{\frac{D^2\epsilon^2}{2}-\frac{5}{4}}+D^2\epsilon^3\langle t\rangle^{\frac{D^2\epsilon^2}{2}-\frac{5}{4}}+\epsilon^3D^3\langle t\rangle^{-\frac{3}{2}}+D\epsilon^3\langle t\rangle^{-\frac{5}{4}}
\end{align*}
Note that in order to integrate this in time we need to assume $\frac{D^2\epsilon^2}{2}<\frac{1}{4}$. Then,
\begin{align*}
    \int_1^t |vR(s,v)|\, ds\lesssim&\,  \epsilon+D^3\epsilon^3+D^2\epsilon^3+D\epsilon^3,
\end{align*}
and so
\begin{align*}
    |v\gamma(t,v)|\lesssim 2\epsilon+ \epsilon+D^3\epsilon^3+D^2\epsilon^3+D\epsilon^3 \lesssim \epsilon
\end{align*}
for $\epsilon$ small enough.
Now, putting this all back into \eqref{u_xtriangle}, we have
\begin{align*}
    |u_x|
    \lesssim&\,  \epsilon \langle t \rangle ^{-\frac{1}{2}}\left[ 5+D^3\epsilon^2+D^2\epsilon^2+D\epsilon^2\right].
\end{align*}
Just as before, if $\epsilon$  is small enough then we have $ 1+\epsilon\ll D\ll \frac{1}{\epsilon}$, which suffices.
Now that we have closed both bootstrap assumptions, we have completed the proof of Theorem \ref{t:main}.

\bibliography{refs}{}
\bibliographystyle{plain}

\end{document}